\documentclass[11pt,a4paper]{article}

\usepackage{amsmath,amsthm,amssymb,amsfonts}
\usepackage{mathtools}
\usepackage{enumerate}
\usepackage[margin=1in]{geometry}
\usepackage{hyperref}
\usepackage{cleveref}

\theoremstyle{plain}
\newtheorem{theorem}{Theorem}[section]
\newtheorem{proposition}[theorem]{Proposition}
\newtheorem{lemma}[theorem]{Lemma}

\theoremstyle{definition}
\newtheorem{definition}[theorem]{Definition}
\newtheorem{example}[theorem]{Example}

\theoremstyle{remark}
\newtheorem{remark}[theorem]{Remark}

\newcommand{\R}{\mathbb{R}}
\newcommand{\N}{\mathbb{N}}

\newcommand{\E}{\mathbb{E}}

\newcommand{\Dom}{\mathrm{Dom}}

\newcommand{\Ran}{\mathrm{Ran}}
\newcommand{\Id}{\mathrm{Id}}

\newcommand{\ip}[2]{\langle #1, #2 \rangle}
\newcommand{\norm}[1]{\| #1 \|}

\newcommand{\Prob}{\mathbb{P}}

\newcommand{\Lp}[1]{L^{#1}}

\newcommand{\Ft}{\mathcal{F}_t}
\newcommand{\Fs}{\mathcal{F}_s}
\newcommand{\FT}{\mathcal{F}_T}
\newcommand{\Fminus}{\mathcal{F}_{t-}}

\newcommand{\HH}{\mathcal{H}_H}
\newcommand{\HX}{\mathcal{H}_X}
\newcommand{\HtX}{\mathcal{H}_t^X}

\title{Stochastic Calculus for Rough Fractional Brownian Motion\\
via Operator Factorization}

\author{Ramiro Fontes\\
Quijotic Research\\
\texttt{ramirofontes@gmail.com}}

\date{January 2026}

\begin{document}

\maketitle

\begin{abstract}
We develop an operator-theoretic formulation of stochastic calculus for fractional Brownian motion with Hurst parameter $H \in (0,\frac{1}{2})$. The approach is based on adjointness between stochastic integration and differentiation in the energy space of the driving process. For square-integrable processes admitting a stochastic integral, we establish a canonical factorization of fluctuations
\[
(\Id -\E) = \delta_X\Pi_X D_X,
\]
where $D_X := \delta_X^*$ is the operator-covariant derivative (adjoint of the stochastic integral), $\delta_X$ the divergence, and $\Pi_X$ the predictable projection in the energy space. For Gaussian Volterra processes, this energy space coincides with the Cameron--Martin space. In the rough fractional regime, the factorization yields explicit formulas for derivatives of cylindrical functionals, controlled expansions of conditional expectations, and an intrinsic identification of the rough (Gubinelli) derivative as the predictable component of the adjoint derivative. We provide complete proofs, computational examples, and rigorous connections to rough path theory. The framework extends naturally to mixed semimartingale--rough processes, providing a unified calculus without requiring iterated integrals or signature constructions.

\medskip
\noindent\textbf{MSC 2020:} 60H07, 60G15, 60G22, 60H05.

\medskip
\noindent\textbf{Keywords:} Fractional Brownian motion; rough paths; Malliavin calculus; Gubinelli derivative; operator theory; energy spaces.
\end{abstract}

\section{Introduction}

Fractional Brownian motion (fBM) with Hurst parameter $H < \frac{1}{2}$ is a fundamental example of a Gaussian process with negatively correlated increments and rough sample paths. Its lack of semimartingale structure precludes the direct application of It\^o calculus, motivating alternative approaches such as Malliavin calculus \cite{DU99, PT00}, divergence-type integrals, and rough path theory \cite{Lyons98, FV10, FH14}.

Rough path theory reconstructs stochastic calculus by enriching the driving signal with iterated integrals (the signature) and defining controlled expansions relative to this enhancement \cite{Gubinelli04}. While powerful, this approach is inherently pathwise and combinatorial in nature. In contrast, Malliavin calculus emphasizes Hilbert space geometry and adjointness between differentiation and integration \cite{Nualart06}.

In a companion work \cite{Fontes26}, we develop an operator-theoretic framework for stochastic calculus with respect to general square-integrable processes, establishing a Clark--Ocone type representation formula via a covariant derivative defined as the adjoint of the stochastic integral. The present paper applies that framework to the rough fractional Brownian motion regime, where classical semimartingale techniques fail and new analytical tools are required.

\subsection{Existing approaches for $H < \frac{1}{2}$}

For fractional Brownian motion in the rough regime $H < \frac{1}{2}$:

\begin{itemize}
\item \textbf{Malliavin calculus} (Decreusefond--\"Ust\"unel \cite{DU99}, Pipiras--Taqqu \cite{PT00}): The divergence operator $\delta_H$ remains well-defined but ceases to be an isometry. The Skorokhod integral provides a notion of stochastic integration, but explicit computations become technical.

\item \textbf{Rough path theory} (Lyons \cite{Lyons98}, Gubinelli \cite{Gubinelli04}, Friz--Hairer \cite{FH14}): Controlled rough paths provide a pathwise integration theory. The key object is the Gubinelli derivative: a path $Y = (Y_t)_{t \in [0,T]}$ is controlled by a path $X = (X_t)_{t \in [0,T]}$ if there exist a path $Y' = (Y'_t)_{t \in [0,T]}$ (the Gubinelli derivative) and a two-parameter remainder $R = (R_{s,t})_{0 \leq s \leq t \leq T}$ such that
\[
Y_t - Y_s = Y'_s(X_t - X_s) + R_{s,t}, \quad |R_{s,t}| = O(|t - s|^{2H}),
\]
for all $0 \leq s \leq t \leq T$.

\item \textbf{Pathwise integration} (Young \cite{Young36}, Z\"ahle \cite{Zahle98}): For $H > \frac{1}{3}$, pathwise Young integration is available. For $H \leq \frac{1}{3}$, rough path lifting is necessary.
\end{itemize}

\subsection{Our contribution}

The purpose of this paper is to show that, for Gaussian Volterra processes and in particular for rough fractional Brownian motion with $H < \frac{1}{2}$, stochastic calculus admits a canonical formulation based purely on operator adjointness in the energy space. We prove that:

\begin{enumerate}
\item The operator-covariant derivative $D_X := \delta_X^*$ defined via Riesz representation yields explicit differentiation formulas for smooth functionals (Theorem~\ref{thm:explicit-formula}).

\item Predictable projection in the energy space produces a canonical martingale decomposition with controlled remainder (Theorem~\ref{thm:controlled-expansion}).

\item The predictable component $\Pi_X D_X F$ coincides with the Gubinelli derivative of classical rough path theory (Theorem~\ref{thm:gubinelli-identification}).

\item The framework extends to mixed semimartingale--rough processes $\alpha B + \beta B^H$ in a unified way (Theorem~\ref{thm:mixed}).
\end{enumerate}

Our approach does not replace rough path theory; rather, it identifies its first-order derivative structure as an intrinsic consequence of Hilbert space geometry. The advantage is conceptual clarity and avoidance of iterated integral machinery for first-order calculus.

\subsection{Organization}

Section~\ref{sec:notation} establishes notation and conventions. Section~\ref{sec:energy} introduces energy spaces and the operator-covariant derivative in full generality. Section~\ref{sec:gaussian} specializes to Gaussian Volterra processes. Section~\ref{sec:factorization} establishes the operator factorization for rough fBM. Section~\ref{sec:explicit} provides explicit computation formulas. Section~\ref{sec:controlled} proves controlled expansion results. Section~\ref{sec:rough-path} establishes the connection to rough path theory. Section~\ref{sec:examples} gives detailed computational examples. Section~\ref{sec:mixed} treats mixed processes.

\section{Notation and Conventions}\label{sec:notation}

We collect here the notation and conventions used throughout the paper.

\paragraph{Time horizon and indices.}
We fix a time horizon $T > 0$. Throughout, $s$ and $t$ denote time indices in $[0,T]$, with $s \leq t$ unless otherwise stated. For two-parameter functions such as remainders, we write $R_{s,t}$ for the value at $(s,t) \in \{(s,t) : 0 \leq s \leq t \leq T\}$.

\paragraph{Probability space and filtrations.}
Throughout, $(\Omega, \mathcal{F}, \Prob)$ denotes a complete probability space. A \emph{filtration} $(\Ft)_{t \in [0,T]}$ is a family of sub-$\sigma$-algebras satisfying $\Fs \subset \Ft$ whenever $s \leq t$. We say the filtration satisfies the \emph{usual conditions} if each $\Ft$ contains all $\Prob$-null sets and $\Ft = \Ft^+ := \bigcap_{s > t} \Fs$ (right-continuity). For a process $X = (X_t)_{t \in [0,T]}$, its \emph{natural filtration} is $\Ft^X := \sigma(X_s : s \leq t)$, augmented to satisfy the usual conditions. The \emph{predictable $\sigma$-algebra} $\mathcal{P}$ on $[0,T] \times \Omega$ is generated by left-continuous adapted processes. The \emph{left-limit $\sigma$-algebra} is $\Fminus := \sigma(\bigcup_{s < t} \Fs)$.

\paragraph{Function spaces.}
For a measure space $(S, \mu)$, we write $\Lp{p}(S, \mu)$ for the space of $p$-integrable functions, with $\Lp{p}(S) := \Lp{p}(S, \mu)$ when the measure is clear. We write $\Lp{2}(\Omega) := \Lp{2}(\Omega, \mathcal{F}, \Prob)$ for square-integrable random variables with norm $\norm{F}_{\Lp{2}} := \E[|F|^2]^{1/2}$, where $\E[\cdot]$ denotes expectation. For a Hilbert space $\mathcal{H}$, we denote its inner product by $\ip{\cdot}{\cdot}_{\mathcal{H}}$ and norm by $\norm{\cdot}_{\mathcal{H}}$. We write $\Lp{2}(\Omega; \mathcal{H})$ for $\mathcal{H}$-valued square-integrable random variables, i.e., measurable maps $u: \Omega \to \mathcal{H}$ with $\E[\norm{u}_{\mathcal{H}}^2] < \infty$. The space $C^k_b(\R^n)$ denotes $k$-times continuously differentiable functions $\R^n \to \R$ with bounded derivatives up to order $k$; $C^\infty_b(\R^n)$ denotes smooth functions with all bounded derivatives.

\paragraph{Operator conventions.}
For an operator $A: \mathcal{H}_1 \to \mathcal{H}_2$ between Hilbert spaces, we write $\Dom(A)$ for its domain, $\Ran(A)$ for its range, and $A^*: \mathcal{H}_2 \to \mathcal{H}_1$ for its \emph{Hilbert space adjoint}, characterized by $\ip{Ax}{y}_{\mathcal{H}_2} = \ip{x}{A^* y}_{\mathcal{H}_1}$ for $x \in \Dom(A)$, $y \in \Dom(A^*)$. The identity operator is denoted $\Id$. An operator $A$ is \emph{closed} if its graph $\{(x, Ax) : x \in \Dom(A)\}$ is closed.

\paragraph{Asymptotic notation.}
We write $f(x) = O(g(x))$ as $x \to a$ if there exist $C, \delta > 0$ such that $|f(x)| \leq C|g(x)|$ for $|x - a| < \delta$. We write $f \lesssim g$ if $f \leq Cg$ for some constant $C > 0$ independent of the relevant parameters.

\paragraph{Indicator functions.}
For a set $A$, we write $\mathbf{1}_A$ for its indicator function: $\mathbf{1}_A(x) = 1$ if $x \in A$ and $\mathbf{1}_A(x) = 0$ otherwise. For an interval $(s,t] \subset [0,T]$, the indicator $\mathbf{1}_{(s,t]}$ is the function equal to $1$ on $(s,t]$ and $0$ elsewhere.

\section{Energy Spaces and the Operator-Covariant Derivative}\label{sec:energy}

This section introduces the general framework for stochastic calculus via operator adjointness. No Gaussian assumption is made here.

\subsection{Energy spaces}

\begin{definition}[Energy space]\label{def:energy-space}
Let $X = (X_t)_{t \in [0,T]}$ be a square-integrable process on $(\Omega, \mathcal{F}, \Prob)$. An \emph{energy space} for $X$ is a pair $(\HX, \delta_X)$ consisting of:
\begin{enumerate}[(i)]
\item A real separable Hilbert space $\HX$ of (equivalence classes of) processes $u = (u_t)_{t \in [0,T]}$;
\item A continuous linear operator $\delta_X: \HX \to \Lp{2}(\Omega)$, called the \emph{stochastic integral} (or \emph{divergence}) with respect to $X$.
\end{enumerate}
\end{definition}

\begin{example}[Martingale energy space]\label{ex:martingale-energy}
Let $X$ be a continuous square-integrable martingale with quadratic variation $\langle X \rangle_t$. The energy space is
\[
\HX := \overline{\{\text{simple predictable processes}\}}^{\norm{\cdot}_{\HX}},
\]
where $\norm{u}_{\HX}^2 := \E\left[\int_0^T |u_t|^2 \, d\langle X \rangle_t\right]$. The stochastic integral $\delta_X(u) := \int_0^T u_t \, dX_t$ is the It\^o integral, which extends by isometry to all of $\HX$.
\end{example}

\begin{definition}[Predictable projection]\label{def:pred-proj}
Assume that $\HX$ consists of jointly measurable processes and that the predictable processes form a closed subspace of $\HX$. The \emph{predictable projection} $\Pi_X: \HX \to \HX$ is the orthogonal projection onto this subspace. For processes $u \in \HX$ with $u_t \in \Lp{2}(\Omega)$ for each $t$,
\[
(\Pi_X u)_t = \E[u_t \mid \Fminus] \quad \text{for } t \in (0,T], \qquad (\Pi_X u)_0 = \E[u_0 \mid \mathcal{F}_0].
\]
\end{definition}

\begin{remark}[When $\Pi_X$ is nontrivial]\label{rem:pi-nontrivial}
For martingale integrals (Example~\ref{ex:martingale-energy}), the energy space $\HX$ consists of predictable processes, so $\Pi_X = \Id$. The projection becomes nontrivial when $\HX$ includes non-predictable elements, as occurs for the Skorokhod integral on Wiener space or for Gaussian Volterra processes.
\end{remark}

\subsection{The operator-covariant derivative}

\begin{definition}[Operator-covariant derivative]\label{def:operator-cov}
Let $X$ be a square-integrable process with energy space $(\HX, \delta_X)$. The \emph{operator-covariant derivative} $D_X: \Lp{2}(\Omega) \to \HX$ is defined as the Hilbert space adjoint of $\delta_X$:
\[
D_X := \delta_X^*.
\]
Explicitly, for $F \in \Lp{2}(\Omega)$, $D_X F \in \HX$ is the unique element satisfying
\begin{equation}\label{eq:Riesz-def}
\E[F \cdot \delta_X(u)] = \ip{D_X F}{u}_{\HX} \quad \text{for all } u \in \HX.
\end{equation}
\end{definition}

The existence and uniqueness of $D_X F$ follow from the Riesz representation theorem:

\begin{proposition}[Existence and uniqueness]\label{prop:riesz}
If $\delta_X: \HX \to \Lp{2}(\Omega)$ is continuous, then for each $F \in \Lp{2}(\Omega)$, there exists a unique $D_X F \in \HX$ satisfying \eqref{eq:Riesz-def}.
\end{proposition}

\begin{proof}
The map $\Lambda_F: u \mapsto \E[F \cdot \delta_X(u)]$ is a bounded linear functional on $\HX$:
\[
|\Lambda_F(u)| = |\E[F \cdot \delta_X(u)]| \leq \norm{F}_{\Lp{2}} \norm{\delta_X(u)}_{\Lp{2}} \leq \norm{F}_{\Lp{2}} \norm{\delta_X}_{\mathrm{op}} \norm{u}_{\HX},
\]
where $\norm{\delta_X}_{\mathrm{op}}$ is the operator norm of $\delta_X$. By the Riesz representation theorem, there exists a unique $D_X F \in \HX$ with $\Lambda_F(u) = \ip{D_X F}{u}_{\HX}$.
\end{proof}

\begin{proposition}[Basic properties]\label{prop:DX-props}
The operator-covariant derivative $D_X: \Lp{2}(\Omega) \to \HX$ satisfies:
\begin{enumerate}[(i)]
\item \textbf{Linearity:} $D_X(\alpha F + \beta G) = \alpha D_X F + \beta D_X G$ for $\alpha, \beta \in \R$.
\item \textbf{Boundedness:} $\norm{D_X F}_{\HX} \leq \norm{\delta_X}_{\mathrm{op}} \norm{F}_{\Lp{2}}$.
\item \textbf{Adjointness:} $D_X = \delta_X^*$, i.e., $\E[F \cdot \delta_X(u)] = \ip{D_X F}{u}_{\HX}$ for all $F \in \Lp{2}(\Omega)$, $u \in \HX$.
\end{enumerate}
\end{proposition}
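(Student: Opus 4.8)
The plan is to derive all three properties directly from the defining relation \eqref{eq:Riesz-def} together with the uniqueness clause of Proposition~\ref{prop:riesz}; no analytic input beyond the Riesz representation theorem already invoked there is needed.

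First, for linearity (i), I would fix $\alpha, \beta \in \R$ and $F, G \in \Lp{2}(\Omega)$ and test against an arbitrary $u \in \HX$. Using linearity of the expectation and of $\delta_X$,
\[
\E\bigl[(\alpha F + \beta G)\,\delta_X(u)\bigr] = \alpha\,\E[F\,\delta_X(u)] + \beta\,\E[G\,\delta_X(u)] = \ip{\alpha D_X F + \beta D_X G}{u}_{\HX}.
\]
Since the element of $\HX$ representing the bounded functional $u \mapsto \E[(\alpha F + \beta G)\delta_X(u)]$ is unique by Proposition~\ref{prop:riesz}, it must coincide with $\alpha D_X F + \beta D_X G$, which is (i). The only point of care here is to argue via uniqueness of the Riesz representative rather than attempting a pointwise verification.

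Next, for boundedness (ii), I would reuse the estimate from the proof of Proposition~\ref{prop:riesz}: the functional $\Lambda_F(u) = \E[F\,\delta_X(u)]$ satisfies $|\Lambda_F(u)| \leq \norm{\delta_X}_{\mathrm{op}}\,\norm{F}_{\Lp{2}}\,\norm{u}_{\HX}$, so its dual norm obeys $\norm{\Lambda_F} \leq \norm{\delta_X}_{\mathrm{op}}\norm{F}_{\Lp{2}}$. Because the Riesz representation is isometric, $\norm{D_X F}_{\HX} = \norm{\Lambda_F}$, and the stated bound follows. Finally, adjointness (iii) is immediate: \eqref{eq:Riesz-def} is precisely the relation $\E[F\cdot\delta_X(u)] = \ip{D_X F}{u}_{\HX}$ for all $F \in \Lp{2}(\Omega)$ and $u \in \HX$, and since $\delta_X$ is continuous and defined on all of $\HX$, its adjoint $D_X = \delta_X^*$ is bounded and everywhere defined on $\Lp{2}(\Omega)$, so no domain restrictions intervene. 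I do not anticipate a genuine obstacle in this proposition; it is a structural consequence of the Riesz representation theorem, and the proof is short once uniqueness and isometry of that theorem are used in the right places.
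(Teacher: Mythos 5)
Your proof is correct, and it matches the paper's intent: the paper gives no separate proof of Proposition~\ref{prop:DX-props}, treating all three properties as immediate consequences of the Riesz representation argument in Proposition~\ref{prop:riesz}, which is exactly what you spell out (uniqueness of the representative for linearity, isometry of the Riesz map plus the bound on $\Lambda_F$ for boundedness, and the defining relation \eqref{eq:Riesz-def} for adjointness). No gaps; your care in arguing via uniqueness rather than pointwise is the right touch.
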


\begin{remark}[No differentiability assumption]\label{rem:no-diff}
Definition~\ref{def:operator-cov} requires \emph{only} that $X$ admits a continuous stochastic integral on a Hilbert space. No differentiability, Gaussian assumption, or smoothness condition is imposed on $F$. The term ``derivative'' reflects the adjoint relationship to integration, not a Leibniz rule.
\end{remark}

\begin{remark}[Random integrands and Malliavin derivative]\label{rem:random-integrands}
Definition~\ref{def:operator-cov} defines $D_X F \in \HX$ as a deterministic element via Riesz representation. For Gaussian processes with Skorokhod integral, the stochastic integral extends to random integrands $u \in \Lp{2}(\Omega; \HX)$, and the adjoint relation becomes $\E[F \cdot \delta_X(u)] = \E[\ip{D_X F}{u}_{\HX}]$ with $D_X F \in \Lp{2}(\Omega; \HX)$ random. In this case, $D_X F$ coincides with the Malliavin derivative. Throughout Sections~\ref{sec:gaussian}--\ref{sec:mixed}, we work in this Gaussian setting where $D_X F$ is random-valued.
\end{remark}

\subsection{The operator factorization}

The following theorem is the main structural result. It requires certain hypotheses on $X$ that are satisfied by martingales and Gaussian processes with the representation property.

\begin{theorem}[Operator factorization]\label{thm:general-factorization}
Let $X$ be a square-integrable process with natural filtration $(\Ft^X)_{t \in [0,T]}$, energy space $(\HX, \delta_X)$, predictable projection $\Pi_X$, and operator-covariant derivative $D_X = \delta_X^*$. Assume:
\begin{enumerate}[(i)]
\item \textbf{Centered integrals:} $\E[\delta_X(u)] = 0$ for all $u \in \HX$.
\item \textbf{Isometry on predictable processes:} $\E[|\delta_X(u)|^2] = \norm{u}_{\HX}^2$ for predictable $u \in \HX$.
\item \textbf{Representation property:} Every $\FT^X$-measurable $\tilde{F} \in \Lp{2}(\Omega)$ with $\E[\tilde{F}] = 0$ can be written as $\tilde{F} = \delta_X(v)$ for some predictable $v \in \HX$.
\end{enumerate}
Then for all $\FT^X$-measurable $F \in \Lp{2}(\Omega)$:
\begin{equation}\label{eq:factorization}
F = \E[F] + \delta_X(\Pi_X D_X F).
\end{equation}
Equivalently, $(\Id - \E) = \delta_X \circ \Pi_X \circ D_X$ on $\Lp{2}(\Omega, \FT^X, \Prob)$.
\end{theorem}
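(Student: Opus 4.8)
The plan is to show that the candidate identity $F = \E[F] + \delta_X(\Pi_X D_X F)$ holds by exhibiting, via the representation property~(iii), a predictable process $v$ with $\tilde F := F - \E[F] = \delta_X(v)$, and then showing that $v = \Pi_X D_X F$. Since $v$ is predictable, $v = \Pi_X v$, so it suffices to prove $\Pi_X D_X F = \Pi_X v$; and since $\Pi_X$ is an orthogonal projection onto the predictable subspace, this reduces to proving that $D_X F - v$ is orthogonal in $\HX$ to every predictable $w \in \HX$, i.e. $\ip{D_X F}{w}_{\HX} = \ip{v}{w}_{\HX}$ for all predictable $w$.

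First I would compute the left-hand side using adjointness (Proposition~\ref{prop:DX-props}(iii)): $\ip{D_X F}{w}_{\HX} = \E[F\,\delta_X(w)] = \E[\tilde F\,\delta_X(w)] + \E[F]\,\E[\delta_X(w)]$, and the second term vanishes by the centering hypothesis~(i), giving $\ip{D_X F}{w}_{\HX} = \E[\delta_X(v)\,\delta_X(w)]$. Now both $v$ and $w$ are predictable, so I would invoke the isometry hypothesis~(ii) in its polarized form: since $\E[|\delta_X(u)|^2] = \norm{u}_{\HX}^2$ for all predictable $u$ and $\delta_X$ is linear, polarization yields $\E[\delta_X(v)\,\delta_X(w)] = \ip{v}{w}_{\HX}$ for predictable $v,w$ (one applies the identity to $u = v+w$ and $u = v-w$ and subtracts; this needs $v+w$ and $v-w$ to remain predictable, which holds since the predictable processes form a linear subspace). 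Combining, $\ip{D_X F}{w}_{\HX} = \ip{v}{w}_{\HX}$ for all predictable $w$, which is exactly the desired orthogonality, so $\Pi_X D_X F = v$. Substituting back, $\E[F] + \delta_X(\Pi_X D_X F) = \E[F] + \delta_X(v) = \E[F] + \tilde F = F$.

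The operator identity $(\Id - \E) = \delta_X \circ \Pi_X \circ D_X$ on $\Lp{2}(\Omega,\FT^X,\Prob)$ is then just a restatement of~\eqref{eq:factorization} holding for every such $F$, with the constant $\E[F]$ read as the image of $F$ under the conditional-expectation (here unconditional) operator.

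The main obstacle I anticipate is purely bookkeeping rather than conceptual: one must be careful that the polarization of the isometry is legitimate, i.e. that hypothesis~(ii) as stated ($\E[|\delta_X(u)|^2] = \norm{u}_{\HX}^2$ for predictable $u$) genuinely upgrades to the bilinear identity $\E[\delta_X(v)\delta_X(w)] = \ip{v}{w}_{\HX}$ on the predictable subspace — this is automatic over $\R$ by the parallelogram argument, but deserves an explicit line. A secondary point worth a remark is that the representation property~(iii) supplies \emph{some} predictable $v$ with $\tilde F = \delta_X(v)$, and such $v$ need not a priori be unique in $\HX$; however, the argument above shows any such $v$ satisfies $v = \Pi_X D_X F$, so in fact $v$ is unique (it is the predictable projection of a canonically defined object), and no ambiguity remains. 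Everything else — linearity of $\delta_X$ and $D_X$, boundedness, the vanishing of $\E[\delta_X(w)]$ — is immediate from the hypotheses and Propositions~\ref{prop:riesz}--\ref{prop:DX-props}.
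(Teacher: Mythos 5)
Your proposal is correct and follows essentially the same route as the paper's proof: represent $F - \E[F] = \delta_X(v)$ via hypothesis (iii), use adjointness plus the centering hypothesis and the polarized isometry to get $\ip{D_X F}{w}_{\HX} = \ip{v}{w}_{\HX}$ for all predictable $w$, and conclude $\Pi_X D_X F = v$. Your explicit remarks on polarization and on the (a posteriori) uniqueness of $v$ are fine additions but do not change the argument.
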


\begin{proof}
Let $F \in \Lp{2}(\Omega, \FT^X)$ and set $\tilde{F} := F - \E[F]$. By hypothesis (iii), there exists predictable $v \in \HX$ with $\tilde{F} = \delta_X(v)$.

For any $u \in \HX$, by adjointness \eqref{eq:Riesz-def}:
\[
\ip{D_X F}{u}_{\HX} = \E[F \cdot \delta_X(u)] = \E[\tilde{F} \cdot \delta_X(u)] + \E[F] \cdot \E[\delta_X(u)] = \E[\delta_X(v) \cdot \delta_X(u)],
\]
where the last equality uses $\tilde{F} = \delta_X(v)$ and hypothesis (i).

For predictable $u$, hypothesis (ii) and polarization give $\E[\delta_X(v) \cdot \delta_X(u)] = \ip{v}{u}_{\HX}$. Thus $\ip{D_X F}{u}_{\HX} = \ip{v}{u}_{\HX}$ for all predictable $u$. Since $\Pi_X$ projects onto predictable processes, $\ip{\Pi_X D_X F}{u}_{\HX} = \ip{D_X F}{u}_{\HX} = \ip{v}{u}_{\HX}$ for all predictable $u$. As both $\Pi_X D_X F$ and $v$ are predictable, this implies $\Pi_X D_X F = v$, hence $\tilde{F} = \delta_X(\Pi_X D_X F)$.
\end{proof}

\section{Gaussian Volterra Processes}\label{sec:gaussian}

We now specialize to Gaussian Volterra processes, where the energy space has additional structure.

\subsection{Setup}

Let $(\Omega, \mathcal{F}, \Prob)$ be a probability space supporting a standard Brownian motion $W = (W_t)_{t\in[0,T]}$.

\begin{definition}[Gaussian Volterra process]\label{def:volterra}
A centered Gaussian process $X = (X_t)_{t\in[0,T]}$ is called a \emph{Gaussian Volterra process} if it admits the representation
\begin{equation}\label{eq:volterra-rep}
X_t = \int_0^t K(t, s) \, dW_s,
\end{equation}
where $K : \{(t, s) : 0 \leq s \leq t \leq T\} \to \R$ is a deterministic \emph{Volterra kernel} satisfying the square-integrability condition $\int_0^T \int_0^t K(t, s)^2 \, ds \, dt < \infty$.
\end{definition}

The covariance function of $X$ is
\begin{equation}\label{eq:covariance}
R_X(t, s) := \E[X_t X_s] = \int_0^{t\wedge s} K(t, r)K(s, r) \, dr,
\end{equation}
where $t \wedge s := \min(t,s)$.

\subsection{Energy space for Gaussian Volterra processes}

For Gaussian Volterra processes, the natural energy space is the \emph{Cameron--Martin space} (also called the reproducing kernel Hilbert space, RKHS).

\begin{definition}[Cameron--Martin space]\label{def:CM}
The \emph{Cameron--Martin space} $\HX$ associated to a Gaussian Volterra process $X$ is the completion of
\[
\mathcal{H}_0 := \mathrm{span}\{R_X(t, \cdot) : t \in [0, T]\}
\]
under the inner product
\begin{equation}\label{eq:CM-ip}
\ip{R_X(t, \cdot)}{R_X(s, \cdot)}_{\HX} := R_X(t, s) = \E[X_t X_s].
\end{equation}
\end{definition}

The space $\HX$ has the \emph{reproducing property}: for $h \in \HX$ and $t \in [0,T]$,
\[
\ip{h}{R_X(t, \cdot)}_{\HX} = h(t).
\]
We write $k_t := R_X(t, \cdot) \in \HX$ for the \emph{representer of evaluation} at time $t$.

\begin{definition}[Isonormal Gaussian map]\label{def:isonormal}
The \emph{isonormal Gaussian map} $I_X : \HX \to \Lp{2}(\Omega)$ is defined by
\[
I_X(k_t) := X_t,
\]
extended by linearity and continuity to all of $\HX$. This map is an isometry: $\E[I_X(h)I_X(g)] = \ip{h}{g}_{\HX}$.
\end{definition}

The stochastic integral (divergence) $\delta_X$ on $\HX$ is the Skorokhod integral, defined as the adjoint of the Malliavin derivative. For deterministic $h \in \HX$, we have $\delta_X(h) = I_X(h)$.

\subsection{Fractional Brownian motion}

\begin{example}[Fractional Brownian motion]\label{ex:fBM}
For $H \in (0, 1)$, \emph{fractional Brownian motion} $B^H = (B^H_t)_{t\in[0,T]}$ is the centered Gaussian process with covariance
\begin{equation}\label{eq:fBM-cov}
R_H(t, s) := \E[B^H_t B^H_s] = \frac{1}{2}\left(t^{2H} + s^{2H} - |t - s|^{2H}\right).
\end{equation}
This is a Gaussian Volterra process with a specific kernel $K_H$ (the Molchan--Golosov kernel for $H < 1/2$; see \cite{DU99}).

For $H < \frac{1}{2}$, the Cameron--Martin space $\HH$ consists of absolutely continuous functions $h: [0,T] \to \R$ with $h(0) = 0$. The norm can be expressed as (see \cite{DU99}, Section 2)
\begin{equation}\label{eq:HH-norm}
\norm{h}^2_{\HH} = c_H \int_0^T \int_0^T \frac{(h'(u) - h'(v))^2}{|u-v|^{2-2H}} \, du \, dv,
\end{equation}
where $h'$ denotes the (a.e.) derivative of $h$ and $c_H > 0$ is a normalizing constant depending only on $H$.
\end{example}

\begin{remark}[Evaluation representers]\label{rem:representers}
For any $H \in (0,1)$, the representer $k_t \in \HH$ of evaluation at time $t$ satisfies
\[
\ip{k_t}{k_s}_{\HH} = R_H(t, s) = \frac{1}{2}(t^{2H} + s^{2H} - |t - s|^{2H}).
\]
For $H \neq 1/2$, explicit closed-form expressions for $k_t$ are complicated, but the representer is uniquely determined by the reproducing property $\ip{h}{k_t}_{\HH} = h(t)$.
\end{remark}

\subsection{Predictable projection in the energy space}

\begin{definition}[Adapted subspace]\label{def:adapted-subspace}
For $t \in [0,T]$, define the \emph{adapted subspace}
\[
\HtX := \overline{\mathrm{span}}\{k_s : s \leq t\} \subset \HX.
\]
This is the closure in $\HX$ of the linear span of evaluation representers up to time $t$.
\end{definition}

\begin{definition}[Predictable projection in energy space]\label{def:energy-pred-proj}
For each $t \in [0,T]$, let $P_t: \HX \to \HtX$ denote the orthogonal projection onto the adapted subspace $\HtX$ (Definition~\ref{def:adapted-subspace}).

For a random element $u \in \Lp{2}(\Omega; \HX)$, the \emph{predictable projection} $\Pi_X u$ combines conditional expectation with spatial projection:
\[
(\Pi_X u)_t := P_t\left(\E[u \mid \Ft^X]\right) \in \HtX \quad \text{for each } t \in [0,T].
\]
That is, we first take the conditional expectation of $u$ given $\Ft^X$, then project onto the adapted subspace $\HtX$. This produces an adapted process: $(\Pi_X u)_t$ is $\Ft^X$-measurable.
\end{definition}

The following lemma justifies the identification between RKHS projection and conditional expectation that underlies our approach.

\begin{lemma}[RKHS projection and conditional expectation]\label{lem:rkhs-cond-exp}
Let $X$ be a Gaussian Volterra process with Cameron--Martin space $\HX$. For any $h \in \HX$, the orthogonal projection $P_t h \in \HtX$ satisfies
\[
I_X(P_t h) = \E[I_X(h) \mid \Ft^X],
\]
where $I_X: \HX \to \Lp{2}(\Omega)$ is the isonormal Gaussian map. That is, RKHS projection onto the adapted subspace corresponds to conditional expectation on the filtration.
\end{lemma}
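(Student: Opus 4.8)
The plan is to transport the identity through the isometry $I_X$ into a statement about orthogonal projection in $\Lp{2}(\Omega)$, and then to invoke the defining feature of a Gaussian Hilbert space: uncorrelated elements are independent.

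First I would set $\mathcal{G}_t := I_X(\HtX) \subset \Lp{2}(\Omega)$ and observe that, since $I_X$ is an isometry carrying $k_s$ to $X_s$ and an isometry sends a closed (hence complete) subspace to a closed subspace, $\mathcal{G}_t = \overline{\mathrm{span}}\{X_s : s \leq t\}$ — the portion of the first Wiener chaos generated by $X$ up to time $t$, whose generated $\sigma$-algebra is exactly $\Ft^X$ up to $\Prob$-null sets. Next I would check that $I_X$ intertwines the two projections: for $h \in \HX$ and any $g \in \HtX$, the identity $\ip{P_t h}{g}_{\HX} = \ip{h}{g}_{\HX}$ (valid because $P_t$ is the orthogonal projection onto $\HtX$) together with the isometry gives $\ip{I_X(h) - I_X(P_t h)}{I_X(g)}_{\Lp{2}} = 0$. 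Letting $g$ range over $\HtX$ (so $I_X(g)$ ranges over all of $\mathcal{G}_t$) shows $I_X(h) - I_X(P_t h) \perp \mathcal{G}_t$ while $I_X(P_t h) \in \mathcal{G}_t$; hence $I_X(P_t h)$ is the $\Lp{2}(\Omega)$-orthogonal projection of $I_X(h)$ onto $\mathcal{G}_t$. At this point the lemma reduces to: for $Y$ in the first chaos, the orthogonal projection of $Y$ onto $\mathcal{G}_t$ equals $\E[Y \mid \Ft^X]$.

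To prove the reduced statement I would decompose $Y = I_X(h) = Z + N$, where $Z$ is the orthogonal projection of $Y$ onto $\mathcal{G}_t$ and $N := Y - Z \perp \mathcal{G}_t$. Since $Z$ is an $\Lp{2}$-limit of linear combinations of $\{X_s : s \leq t\}$, it is $\Ft^X$-measurable (augmentation by null sets is irrelevant for conditional expectations). The variables $N$ and $\{X_s : s \leq t\}$ all lie in the first Wiener chaos, hence are jointly Gaussian, and $N$ is uncorrelated with each $X_s$, $s \leq t$; therefore $N$ is independent of the family $\{X_s : s \leq t\}$ and thus of $\Ft^X$. Being centered, $\E[N \mid \Ft^X] = \E[N] = 0$, so $\E[Y \mid \Ft^X] = Z = I_X(P_t h)$, as claimed. (If preferred, one can first argue for $h \in \mathcal{H}_0$ and pass to the limit, both sides being continuous in $h$, but the argument above already handles general $h \in \HX$.)

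The single genuinely nontrivial ingredient is the passage from uncorrelatedness to independence, which is the standard structural property of a Gaussian family and is the only place where Gaussianity of $X$ enters in an essential way; the remaining steps are bookkeeping with the isometry $I_X$ and with the augmentation of $\Ft^X$. I therefore do not expect a real obstacle here, only the need to state the Gaussian independence fact carefully and to make sure the identification $\mathcal{G}_t = I_X(\HtX)$ accounts correctly for taking closures.
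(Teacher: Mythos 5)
Your proof is correct and follows essentially the same route as the paper: transport the RKHS projection through the isometry $I_X$ and identify conditional expectation on the first Wiener chaos with $\Lp{2}$-orthogonal projection onto the Gaussian subspace $\overline{\mathrm{span}}\{X_s : s \leq t\}$. The only difference is that you prove the key Gaussian step (uncorrelated implies independent, hence the projection of a first-chaos variable equals its conditional expectation given $\Ft^X$) directly, whereas the paper delegates exactly this point to a citation of Nualart.
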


\begin{proof}
This is a consequence of the isometry between the Cameron--Martin space and the first Wiener chaos. By Definition~\ref{def:isonormal}, $I_X$ is an isometry from $\HX$ to the Gaussian subspace of $\Lp{2}(\Omega)$ generated by $X$. The adapted subspace $\HtX = \overline{\mathrm{span}}\{k_s : s \leq t\}$ maps isometrically onto the Gaussian subspace generated by $\{X_s : s \leq t\}$, which equals $\Lp{2}(\Omega, \Ft^X) \cap \mathcal{H}_1(X)$, where $\mathcal{H}_1(X)$ denotes the first Wiener chaos.

For Gaussian random variables, conditional expectation onto $\Ft^X$ restricted to the first chaos is orthogonal projection onto this subspace. Since $I_X$ is an isometry preserving this structure, $I_X(P_t h) = \E[I_X(h) \mid \Ft^X]$. See \cite{Nualart06}, Proposition 1.2.1 and the discussion following Definition 1.3.1.
\end{proof}

\begin{remark}[Relation to Definition~\ref{def:pred-proj}]\label{rem:two-projections}
Definition~\ref{def:pred-proj} defines predictable projection for stochastic processes via pointwise conditional expectation $\E[u_t \mid \Fminus]$. Definition~\ref{def:energy-pred-proj} defines it for random elements of the Cameron--Martin space $\HX$, combining conditional expectation with spatial projection $P_t$. By Lemma~\ref{lem:rkhs-cond-exp}, for Gaussian processes these operations are compatible: both produce the predictable integrand in the Clark--Ocone formula. We use the same notation $\Pi_X$ for both, with context determining which is meant.
\end{remark}

\begin{remark}[Interpretation]\label{rem:proj-interpretation}
When $u = D_X F$ for some functional $F$, the predictable projection $(\Pi_X D_X F)_t$ extracts the ``best prediction'' of $D_X F$ given information up to time $t$, restricted to the adapted subspace. For cylindrical functionals $F = f(X_{t_1}, \ldots, X_{t_n})$ with derivative $D_X F = \sum_i \partial_i f(\ldots) k_{t_i}$:
\[
(\Pi_X D_X F)_t = \sum_i \E[\partial_i f(X_{t_1}, \ldots, X_{t_n}) \mid \Ft^X] \cdot P_t k_{t_i}.
\]
This is $\Ft^X$-measurable by construction.
\end{remark}

\subsection{Connection to Malliavin calculus}

Let $\mathcal{S}$ denote the space of \emph{smooth cylindrical functionals}:
\[
\mathcal{S} := \left\{F = f(X_{t_1}, \ldots, X_{t_n}) : n \in \N, \, t_i \in [0,T], \, f \in C^\infty_b(\R^n)\right\}.
\]

The \emph{Malliavin derivative} $D: \mathcal{S} \to \Lp{2}(\Omega; \HX)$ is defined by
\begin{equation}\label{eq:malliavin-def}
DF := \sum_{i=1}^n \partial_i f(X_{t_1}, \ldots, X_{t_n}) \, k_{t_i}.
\end{equation}

\begin{proposition}[Agreement with Malliavin derivative]\label{prop:agreement}
For $F \in \mathcal{S}$, the operator-covariant derivative $D_X F$ coincides with the Malliavin derivative $DF$.
\end{proposition}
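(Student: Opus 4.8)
The plan is to verify the defining Riesz relation \eqref{eq:Riesz-def} for $D_X F$ by testing against a dense family of integrands, and to show that the Malliavin derivative $DF$ from \eqref{eq:malliavin-def} satisfies the same relation; uniqueness in Proposition~\ref{prop:riesz} then forces $D_X F = DF$. Since we are in the Gaussian setting of Remark~\ref{rem:random-integrands}, the adjoint relation for a smooth $F \in \mathcal{S}$ reads $\E[F \cdot \delta_X(u)] = \E[\ip{D_X F}{u}_{\HX}]$ for all (sufficiently regular, e.g.\ deterministic or smooth) $u$, so it suffices to test against deterministic $h \in \HX$, for which $\delta_X(h) = I_X(h)$ by Definition~\ref{def:isonormal}. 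Concretely, I would fix $F = f(X_{t_1},\dots,X_{t_n})$ with $f \in C^\infty_b(\R^n)$ and $h \in \HX$, and compare $\E[f(X_{t_1},\dots,X_{t_n}) \, I_X(h)]$ with $\E[\ip{DF}{h}_{\HX}] = \sum_{i=1}^n \E[\partial_i f(X_{t_1},\dots,X_{t_n})] \, \ip{k_{t_i}}{h}_{\HX} = \sum_{i=1}^n \E[\partial_i f(X_{t_1},\dots,X_{t_n})] \, h(t_i)$, using the reproducing property $\ip{k_{t_i}}{h}_{\HX} = h(t_i)$.

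The key computational step is the Gaussian integration-by-parts identity: if $(G, Z)$ is a jointly Gaussian centered vector with $Z = I_X(h)$, then for $\phi \in C^\infty_b$, $\E[\phi(G) Z] = \sum_i \E[\partial_i \phi(G)] \, \Cov(G_i, Z)$. Applying this with $G = (X_{t_1},\dots,X_{t_n})$ and $Z = I_X(h)$, and noting that $\Cov(X_{t_i}, I_X(h)) = \E[I_X(k_{t_i}) I_X(h)] = \ip{k_{t_i}}{h}_{\HX} = h(t_i)$ by the isometry of $I_X$ and the reproducing property, yields
\[
\E[F \cdot I_X(h)] = \sum_{i=1}^n \E[\partial_i f(X_{t_1},\dots,X_{t_n})] \, h(t_i) = \E[\ip{DF}{h}_{\HX}].
\]
Thus $DF$ satisfies the defining relation for $D_X F$ against all deterministic $h$. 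To upgrade from deterministic integrands to all of $\HX$ (as required by \eqref{eq:Riesz-def} in its random-integrand form), one invokes the standard fact that $\delta_X$ is the closed adjoint of the Malliavin derivative $D$ on Wiener space, so that $\E[F \delta_X(u)] = \E[\ip{DF}{u}_{\HX}]$ holds for all $u \in \Dom(\delta_X)$ with $F \in \mathcal{S}$; alternatively, since smooth random integrands $u = \sum_j G_j h_j$ with $G_j \in \mathcal{S}$ and $h_j \in \HX$ deterministic are dense, one reduces to the deterministic case by the product rule $\delta_X(G h) = G\,\delta_X(h) - \ip{DG}{h}_{\HX}$ together with the same Gaussian integration-by-parts.

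The main obstacle, though it is more bookkeeping than genuine difficulty, is making precise the passage between the two formulations of the adjoint relation: Definition~\ref{def:operator-cov} and Proposition~\ref{prop:riesz} define $D_X F$ via Riesz representation as the element of $\HX$ pairing correctly against \emph{deterministic} $u \in \HX$, while the Malliavin derivative is a priori a \emph{random} element of $\Lp2(\Omega;\HX)$; the identification uses Remark~\ref{rem:random-integrands}, which asserts that in the Gaussian case $D_X$ extends to the random-integrand adjoint and that this extension is the Malliavin derivative — so strictly speaking the proposition records a consistency check rather than an independent derivation. I would therefore state the argument at the level of: (1) recall $DF \in \Lp2(\Omega;\HX)$ is well-defined by \eqref{eq:malliavin-def} for $F \in \mathcal S$; (2) verify via Gaussian integration by parts that $\E[F\,\delta_X(h)] = \E[\ip{DF}{h}_{\HX}]$ for every deterministic $h \in \HX$, which by Proposition~\ref{prop:riesz} pins down the deterministic Riesz element as $D_X F = \E[DF]$ in the non-random reading, and as $D_X F = DF$ under the random-integrand extension of Remark~\ref{rem:random-integrands}; (3) note that \eqref{eq:malliavin-def} is independent of the representation of $F$ (a standard closability fact), so the identification is well-posed. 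No step requires an estimate beyond boundedness of $f$ and its first derivatives, so the proof is short.
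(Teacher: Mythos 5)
Your proposal is correct and follows essentially the same route as the paper: Gaussian integration by parts against deterministic $h \in \HX$, the isometry $\E[X_{t_i} I_X(h)] = \ip{k_{t_i}}{h}_{\HX}$, and then passage to the random-integrand adjoint relation of Remark~\ref{rem:random-integrands}. Your explicit handling of the deterministic-versus-random subtlety (via the product rule $\delta_X(Gh) = G\,\delta_X(h) - \ip{DG}{h}_{\HX}$ or the standard $D$--$\delta$ duality) is in fact slightly more careful than the paper's brief ``by density'' remark, but it is the same argument in substance.
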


\begin{proof}
For $F = f(X_{t_1}, \ldots, X_{t_n}) \in \mathcal{S}$ and $u \in \HX$, we verify the adjoint relation. For Gaussian Volterra processes, the Skorokhod integral extends to random integrands $u \in \Lp{2}(\Omega; \HX)$, and the adjoint relation becomes
\begin{equation}\label{eq:skorokhod-adjoint}
\E[F \cdot \delta_X(u)] = \E[\ip{DF}{u}_{\HX}],
\end{equation}
where $DF$ is the Malliavin derivative (see \cite{Nualart06}, Proposition 1.3.1).

For deterministic $u \in \HX$, we use $\delta_X(u) = I_X(u)$ and the Gaussian integration by parts formula:
\[
\E[f(X_{t_1}, \ldots, X_{t_n}) \cdot I_X(u)] = \sum_{i=1}^n \E\left[\partial_i f(X_{t_1}, \ldots, X_{t_n})\right] \cdot \E[X_{t_i} \cdot I_X(u)].
\]
By the isometry property of $I_X$: $\E[X_{t_i} \cdot I_X(u)] = \ip{k_{t_i}}{u}_{\HX}$. Thus
\[
\E[F \cdot \delta_X(u)] = \sum_{i=1}^n \E[\partial_i f(\ldots)] \cdot \ip{k_{t_i}}{u}_{\HX} = \E\left[\ip{\sum_{i=1}^n \partial_i f(\ldots) \, k_{t_i}}{u}_{\HX}\right],
\]
which equals $\E[\ip{DF}{u}_{\HX}]$ with $DF = \sum_{i=1}^n \partial_i f(\ldots) k_{t_i}$. This confirms \eqref{eq:skorokhod-adjoint} for deterministic $u$, and by density extends to all $u \in \Lp{2}(\Omega; \HX)$.

Thus $D_X F := DF \in \Lp{2}(\Omega; \HX)$ is the Malliavin derivative, satisfying the adjoint relation \eqref{eq:skorokhod-adjoint}.
\end{proof}

\section{Operator Factorization for Rough fBM}\label{sec:factorization}

We now establish the operator factorization for fractional Brownian motion with $H < 1/2$.

\begin{theorem}[Operator factorization for rough fBM]\label{thm:factorization-fBM}
Let $B^H$ be fractional Brownian motion with $H \in (0, \frac{1}{2})$, and let $\HH$ be its Cameron--Martin space with stochastic integral $\delta_H$ (Skorokhod integral). For any $F \in \Lp{2}(\Omega, \FT^{B^H})$ with $D_{B^H} F \in \Lp{2}(\Omega; \HH)$,
\begin{equation}\label{eq:factorization-fBM}
(\Id - \E)F = \delta_H(\Pi_{B^H} D_{B^H} F).
\end{equation}
\end{theorem}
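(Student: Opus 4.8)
The plan is to verify that fractional Brownian motion with $H \in (0,\frac12)$ satisfies the three hypotheses of the general operator factorization theorem (Theorem~\ref{thm:general-factorization}), with the energy space $(\HH, \delta_H)$, and then invoke that theorem directly. Concretely, I would first recall that $\delta_H$ is the Skorokhod integral on the Cameron--Martin space of $B^H$, defined as the adjoint of the Malliavin derivative, and that on deterministic integrands $h \in \HH$ it coincides with the isonormal Gaussian map $I_X(h)$. Hypothesis~(i) (centered integrals) is then immediate: $\E[\delta_H(u)] = 0$ because Skorokhod integrals have zero mean (the constant $1$ has Malliavin derivative $0$, so $\E[\delta_H(u)] = \E[1 \cdot \delta_H(u)] = \E[\ip{D1}{u}_{\HH}] = 0$). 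Hypothesis~(ii) (isometry on predictable integrands) follows because for predictable $u \in \HH$ the Skorokhod integral agrees with the classical adapted (Itô-type) integral against $B^H$ in its Cameron--Martin parametrization, and the isonormal structure gives $\E[|\delta_H(u)|^2] = \norm{u}_{\HH}^2$ when the second chaos cross-term vanishes; the point is that predictable integrands eliminate the trace term in the Skorokhod isometry $\E[|\delta_H(u)|^2] = \norm{u}_{\HH}^2 + \E[\Tr((Du)^2)]$, so it reduces to the RKHS norm.

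The substantive hypothesis is~(iii), the representation property: every centered $F \in \Lp{2}(\Omega, \FT^{B^H})$ can be written as $\delta_H(v)$ for some predictable $v \in \HH$. This is precisely a Clark--Ocone type representation for fBM; I would obtain it by transferring the classical Clark--Ocone formula from the underlying Brownian motion $W$ (via the Molchan--Golosov representation $B^H_t = \int_0^t K_H(t,s)\,dW_s$) to the fBM energy space. Writing $F = \E[F] + \int_0^T \E[D^W_s F \mid \mathcal{F}_s]\,dW_s$ in terms of the Wiener Malliavin derivative $D^W$, the kernel operator $K_H$ (which is an isomorphism from $\Lp{2}([0,T])$-type spaces onto $\HH$, by Decreusefond--Üstünel) intertwines the Wiener integral with $\delta_H$, so the adapted Wiener integrand pulls back to a predictable element $v \in \HH$ with $\delta_H(v) = F - \E[F]$. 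The filtrations match because $\Ft^{B^H} = \Ft^W$ (the Molchan--Golosov kernel is invertible as a Volterra operator), which is what makes ``predictable'' unambiguous across the two pictures.

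Once hypotheses~(i)--(iii) are checked, Theorem~\ref{thm:general-factorization} applies verbatim and yields $F = \E[F] + \delta_H(\Pi_{B^H} D_{B^H} F)$, which is \eqref{eq:factorization-fBM}; here $D_{B^H} = \delta_H^*$ is the operator-covariant derivative, which by Proposition~\ref{prop:agreement} coincides with the Malliavin derivative on the dense class $\mathcal{S}$ and extends to all $F$ with $D_{B^H} F \in \Lp{2}(\Omega;\HH)$, and $\Pi_{B^H}$ is the predictable projection of Definition~\ref{def:energy-pred-proj}. I expect the main obstacle to be hypothesis~(iii): one must be careful that the Molchan--Golosov kernel operator $K_H$, while bounded and injective, is \emph{not} an isometry for $H < \frac12$, so the transfer of the Clark--Ocone representation preserves predictability and square-integrability but not norms; verifying that the pulled-back integrand genuinely lies in $\HH$ (rather than merely a larger distribution space) is the delicate point, and it is here that the rough regime $H < \frac12$ differs qualitatively from $H > \frac12$. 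The isometry hypothesis~(ii) also requires some care: strictly it holds for predictable $u$ in the image of the adapted-integrand class under $K_H$, and I would note that this suffices because $\Pi_{B^H} D_{B^H} F$ is predictable by construction, so the relevant instance of (ii) is exactly the one that is available.
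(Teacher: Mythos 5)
There is a genuine gap, and it sits exactly where you flagged ``some care'' but then waved it through: hypothesis~(ii) of Theorem~\ref{thm:general-factorization} fails for $B^H$ with $H<\tfrac12$. Your justification --- that predictability kills the trace term in the Skorokhod isometry, so $\E[|\delta_H(u)|^2]=\norm{u}_{\HH}^2$ for adapted $u$ --- is a fact about standard Brownian motion, where $\HX=\Lp{2}([0,T])$ and the cross term $\E\bigl[\int\!\!\int D_s u_t\,D_t u_s\,ds\,dt\bigr]$ vanishes because the pairing is local in time and adaptedness puts the two factors on opposite sides of the diagonal. For fBM the pairing $\ip{Du}{Du}_{\HH\otimes\HH}$ is built from the covariance $R_H$ and is non-local in time, so adaptedness of $u$ does not make it vanish; this is precisely the content of Remark~\ref{rem:non-isometry} in the paper. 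Since the proof of Theorem~\ref{thm:general-factorization} uses (ii) (via polarization) to pass from $\ip{D_XF}{u}=\E[\delta_X(v)\delta_X(u)]$ to $\ip{D_XF}{u}=\ip{v}{u}$ and thereby identify $v=\Pi_X D_X F$, your argument breaks at the one step that actually produces the claimed integrand. A secondary structural point: Theorem~\ref{thm:general-factorization} is stated for deterministic elements of $\HX$ with $\delta_X$ bounded, whereas \eqref{eq:factorization-fBM} lives in the random-integrand setting $\Lp{2}(\Omega;\HH)$ of Remark~\ref{rem:random-integrands}; so it cannot be applied ``verbatim'' in any case --- one would need its random-integrand analogue, and that is exactly where the missing isometry/duality $\E[\delta_H(v)\delta_H(u)]=\E[\ip{v}{u}_{\HH}]$ would be needed and is false.

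By contrast, the paper does not route through Theorem~\ref{thm:general-factorization} at all: it invokes the Clark--Ocone formula for fBM (Decreusefond--\"Ust\"unel; Nualart), $F=\E[F]+\int_0^T \E[D_rF\mid\mathcal{F}_r^{B^H}]\,dB^H_r$, and then identifies the Clark--Ocone integrand with $\Pi_{B^H}D_{B^H}F$ by a purely reproducing-kernel computation: since $k_r\in\HH^r$, one has $\ip{P_r h}{k_r}_{\HH}=\ip{h}{k_r}_{\HH}$, and conditional expectation commutes with the inner product, giving $\bigl((\Pi_{B^H}D_{B^H}F)_r\bigr)(r)=\E[D_rF\mid\mathcal{F}_r^{B^H}]$. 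Your sketch of hypothesis~(iii) via the Molchan--Golosov transfer is essentially how that Clark--Ocone formula is proved in the literature, so that portion of your proposal is sound in spirit; the fix is to use it (or cite it) directly and then do the projection identification as above, rather than trying to satisfy an isometry hypothesis that the rough regime is specifically designed to violate.
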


\begin{proof}
We appeal to the Clark--Ocone formula for fractional Brownian motion. By \cite{DU99} (see also \cite{Nualart06}, Section 1.3), for $F \in \Lp{2}(\Omega, \FT^{B^H})$ with $D_{B^H} F \in \Lp{2}(\Omega; \HH)$:
\begin{equation}\label{eq:clark-ocone}
F = \E[F] + \int_0^T \E[D_r F \mid \mathcal{F}_r^{B^H}] \, dB^H_r,
\end{equation}
where $D_r F$ denotes the Malliavin derivative of $F$ evaluated at time $r$, and the integral is the Skorokhod integral.

In our notation, $D_{B^H} F \in \Lp{2}(\Omega; \HH)$ is the full Malliavin derivative as a random element of the Cameron--Martin space. The integrand in \eqref{eq:clark-ocone} is $u_r := \E[D_r F \mid \mathcal{F}_r^{B^H}]$, which is $\mathcal{F}_r^{B^H}$-measurable.

By Definition~\ref{def:energy-pred-proj}, our predictable projection satisfies
\[
(\Pi_{B^H} D_{B^H} F)_r = P_r\left(\E[D_{B^H} F \mid \mathcal{F}_r^{B^H}]\right),
\]
where $P_r: \HH \to \HH^r$ is orthogonal projection onto the adapted subspace. We show this equals the Clark--Ocone integrand when evaluated at $r$. Since $k_r \in \HH^r$, the orthogonal projection satisfies $\ip{P_r h}{k_r}_{\HH} = \ip{h}{k_r}_{\HH}$ for any $h \in \HH$. Thus:
\begin{align*}
\left((\Pi_{B^H} D_{B^H} F)_r\right)(r) &= \ip{P_r\left(\E[D_{B^H} F \mid \mathcal{F}_r^{B^H}]\right)}{k_r}_{\HH} \\
&= \ip{\E[D_{B^H} F \mid \mathcal{F}_r^{B^H}]}{k_r}_{\HH} \\
&= \E[\ip{D_{B^H} F}{k_r}_{\HH} \mid \mathcal{F}_r^{B^H}] \\
&= \E[D_r F \mid \mathcal{F}_r^{B^H}],
\end{align*}
where the third equality uses linearity of conditional expectation and the last uses the reproducing property $(D_{B^H} F)(r) = \ip{D_{B^H} F}{k_r}_{\HH}$.

Thus the integrand in the Clark--Ocone formula \eqref{eq:clark-ocone} coincides with the predictable projection $\Pi_{B^H} D_{B^H} F$, giving \eqref{eq:factorization-fBM}.
\end{proof}

\begin{remark}[Non-isometry for $H < 1/2$]\label{rem:non-isometry}
Unlike the case $H = 1/2$ (standard Brownian motion), for $H < 1/2$ the Skorokhod integral $\delta_H$ is not an isometry from $\Lp{2}(\Omega; \HH)$ to $\Lp{2}(\Omega)$. For $u \in \Dom(\delta_H) \cap \Lp{2}(\Omega; \HH)$ with $Du \in \Lp{2}(\Omega; \HH \otimes \HH)$, we have (see \cite{Nualart06}, Proposition 1.3.1)
\[
\E[|\delta_H(u)|^2] = \E[\norm{u}^2_{\HH}] + \E[\ip{Du}{Du}_{\HH \otimes \HH}],
\]
where the second term involves the Hilbert--Schmidt inner product of the Malliavin derivative $Du$ with itself. This term is generally nonzero when $u$ is random, which is one reason the rough regime requires more care than the standard Brownian case.
\end{remark}

\section{Explicit Computation Formulas}\label{sec:explicit}

\subsection{Cylindrical functionals}

\begin{theorem}[Explicit derivative formula]\label{thm:explicit-formula}
Let $B^H$ be fractional Brownian motion with $H \in (0, \frac{1}{2})$, and let
\[
F = f(B^H_{t_1}, \ldots, B^H_{t_n}), \quad f \in C^1_b(\R^n), \quad 0 \leq t_1 < \cdots < t_n \leq T.
\]
Then
\begin{equation}\label{eq:explicit-DX}
D_{B^H} F = \sum_{i=1}^n \partial_i f(B^H_{t_1}, \ldots, B^H_{t_n}) \, k_{t_i},
\end{equation}
where $k_{t_i} \in \HH$ is the evaluation representer at time $t_i$.
\end{theorem}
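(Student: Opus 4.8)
The plan is to reduce everything to the smooth case already established in Proposition~\ref{prop:agreement} and then remove the extra regularity by a mollification-and-closability argument.

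First, observe that when $f \in C^\infty_b(\R^n)$ the claimed identity \eqref{eq:explicit-DX} is nothing but Proposition~\ref{prop:agreement} combined with the defining formula \eqref{eq:malliavin-def}: in that case $F \in \mathcal{S}$ and $D_{B^H} F = DF = \sum_{i=1}^n \partial_i f(B^H_{t_1},\ldots,B^H_{t_n})\,k_{t_i}$. It therefore remains to upgrade $f$ from $C^\infty_b$ to $C^1_b$. To this end, fix a nonnegative mollifier $\rho \in C^\infty_c(\R^n)$ with $\int_{\R^n}\rho = 1$, set $\rho_m(x) := m^n \rho(mx)$, and define $f_m := f * \rho_m$. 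Then $f_m \in C^\infty_b(\R^n)$, with $\norm{f_m}_\infty \le \norm{f}_\infty$ and $\partial_i f_m = (\partial_i f) * \rho_m$, so that $\norm{\partial_i f_m}_\infty \le \norm{\partial_i f}_\infty$; moreover, since $f$ and each $\partial_i f$ are continuous, $f_m \to f$ and $\partial_i f_m \to \partial_i f$ uniformly on every compact subset of $\R^n$. Write $F_m := f_m(B^H_{t_1},\ldots,B^H_{t_n})$ and let $G := \sum_{i=1}^n \partial_i f(B^H_{t_1},\ldots,B^H_{t_n})\,k_{t_i}$; note that $G \in \Lp{2}(\Omega;\HH)$, because $\partial_i f$ is bounded and $\norm{k_{t_i}}_{\HH}^2 = R_H(t_i,t_i) = t_i^{2H} < \infty$ by Remark~\ref{rem:representers}.

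Next, pass to the limit. The random vector $(B^H_{t_1},\ldots,B^H_{t_n})$ takes values in $\R^n$ almost surely, so the locally uniform convergence above yields $F_m \to F$ and $\partial_i f_m(B^H_{t_1},\ldots,B^H_{t_n}) \to \partial_i f(B^H_{t_1},\ldots,B^H_{t_n})$ almost surely. Combining this with the uniform bounds $\norm{f_m}_\infty \le \norm{f}_\infty$ and $\norm{\partial_i f_m}_\infty \le \norm{\partial_i f}_\infty$ and dominated convergence, we get $F_m \to F$ in $\Lp{2}(\Omega)$ and
\[
D_{B^H} F_m = \sum_{i=1}^n \partial_i f_m(B^H_{t_1},\ldots,B^H_{t_n})\,k_{t_i} \longrightarrow G \quad \text{in } \Lp{2}(\Omega;\HH).
\]
The Malliavin derivative on the isonormal Gaussian space associated with $B^H$ is a closable operator from $\Lp{2}(\Omega)$ into $\Lp{2}(\Omega;\HH)$ (see \cite{Nualart06}, Proposition~1.2.1), so its closure $D_{B^H}$ satisfies $F \in \Dom(D_{B^H})$ with $D_{B^H} F = G$, which is precisely \eqref{eq:explicit-DX}.

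The one point that deserves attention is the appeal to closability in the last step, since for $H < \tfrac{1}{2}$ one cannot lean on a Wiener-chaos isometry of the kind available when $H = \tfrac{1}{2}$; indeed $\delta_H$ fails to be an isometry on random integrands (Remark~\ref{rem:non-isometry}). However, closability of $D$ rests only on the density of smooth cylindrical functionals in $\Lp{2}(\Omega)$ together with the duality $\E[F\,\delta_H(u)] = \E[\ip{D_{B^H} F}{u}_{\HH}]$ tested against deterministic (in fact smooth) integrands $u$ — exactly the adjoint relation \eqref{eq:skorokhod-adjoint} of Proposition~\ref{prop:agreement} — and this holds for every Gaussian Volterra process regardless of the value of $H$. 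An alternative route would bypass closability by verifying \eqref{eq:skorokhod-adjoint} directly for $f \in C^1_b$ via the Gaussian integration-by-parts (Stein) identity, which is valid for $C^1$ functions with bounded gradient; but a limiting argument is still needed to certify the membership $F \in \Dom(D_{B^H})$, so the mollification step cannot be avoided in either formulation.
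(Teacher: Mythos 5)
Your proposal is correct, and it is in fact more complete than the paper's own argument. The paper disposes of this theorem in one line, citing Proposition~\ref{prop:agreement} together with \eqref{eq:malliavin-def}; but that proposition is stated only for smooth cylindrical functionals with $f \in C^\infty_b(\R^n)$, whereas the theorem allows $f \in C^1_b(\R^n)$, so the paper silently skips exactly the regularity gap you close. Your mollification step ($f_m = f*\rho_m$, uniform bounds on $f_m$ and $\partial_i f_m$, locally uniform convergence, dominated convergence in $\Lp{2}(\Omega)$ and $\Lp{2}(\Omega;\HH)$) followed by closability of the Malliavin derivative is the standard and correct way to do this, and your observation that closability is insensitive to the failure of the Skorokhod isometry for $H<\tfrac12$ is the right point to make: closability comes from the Gaussian integration-by-parts identity on the isonormal space generated by $B^H$, which holds for every $H$. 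One small refinement: the closability proof needs the integration-by-parts formula in its product form, i.e.\ duality tested against simple random integrands of the form $G\,h$ with $G$ a smooth cylindrical functional and $h \in \HH$ deterministic (equivalently $\E[\langle DF, h\rangle_{\HH} G] = -\E[F\langle DG, h\rangle_{\HH}] + \E[F\,G\,\delta_H(h)]$), not only duality against deterministic $u$ as your phrasing suggests; this is still standard and unaffected by $H<\tfrac12$, so the conclusion stands. In short: same reduction to Proposition~\ref{prop:agreement} as the paper for the smooth case, plus an approximation argument the paper omits but the stated hypothesis $f \in C^1_b$ actually requires.
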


\begin{proof}
This is immediate from Proposition~\ref{prop:agreement} and the definition \eqref{eq:malliavin-def} of the Malliavin derivative.
\end{proof}

\subsection{Predictable component}

\begin{proposition}[Predictable projection formula]\label{prop:pred-proj}
Let $F$ be as in Theorem~\ref{thm:explicit-formula}. For any $s \in [0, T]$, the predictable projection evaluated at time $s$ is
\begin{equation}\label{eq:pred-proj}
(\Pi_{B^H} D_{B^H} F)_s = \sum_{i=1}^n \E[\partial_i f(B^H_{t_1}, \ldots, B^H_{t_n}) \mid \Fs^{B^H}] \cdot P_s k_{t_i},
\end{equation}
where $P_s : \HH \to \HH^s$ is the orthogonal projection onto the adapted subspace $\HH^s = \overline{\mathrm{span}}\{k_u : u \leq s\}$.
\end{proposition}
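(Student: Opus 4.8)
The plan is to obtain \eqref{eq:pred-proj} by specializing the general construction of the predictable projection (Definition~\ref{def:energy-pred-proj}) to the explicit formula for $D_{B^H}F$ provided by Theorem~\ref{thm:explicit-formula}. The only substantive point is the legitimacy of interchanging the conditional expectation $\E[\,\cdot\mid\Fs^{B^H}]$ and the deterministic bounded operator $P_s$ with the deterministic evaluation representers $k_{t_i}$ and with the finite sum; once these interchanges are justified, the identity is immediate.

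First I would set $G_i := \partial_i f(B^H_{t_1},\ldots,B^H_{t_n})$ and note that, since $f\in C^1_b(\R^n)$, each $G_i$ is bounded, hence in $\Lp{2}(\Omega)$; by Theorem~\ref{thm:explicit-formula}, $D_{B^H}F = \sum_{i=1}^n G_i\,k_{t_i}\in\Lp{2}(\Omega;\HH)$, so Definition~\ref{def:energy-pred-proj} applies and gives
\[
(\Pi_{B^H}D_{B^H}F)_s \;=\; P_s\big(\E[D_{B^H}F\mid\Fs^{B^H}]\big).
\]
Next I would compute the $\HH$-valued (Bochner) conditional expectation. For a scalar $\xi\in\Lp{1}(\Omega)$, a deterministic $h\in\HH$, and a sub-$\sigma$-algebra $\mathcal{G}\subset\mathcal{F}$, one has $\E[\xi h\mid\mathcal{G}] = \E[\xi\mid\mathcal{G}]\,h$: both sides are $\mathcal{G}$-measurable $\HH$-valued maps, and for every $A\in\mathcal{G}$ the defining averaging identity $\E[\mathbf{1}_A\,\xi h] = \E[\mathbf{1}_A\xi]\,h = \E[\mathbf{1}_A\,\E[\xi\mid\mathcal{G}]]\,h$ holds because the constant vector $h$ factors out of the Bochner integral. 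Summing over $i$ and using linearity of conditional expectation,
\[
\E[D_{B^H}F\mid\Fs^{B^H}] \;=\; \sum_{i=1}^n \E[G_i\mid\Fs^{B^H}]\,k_{t_i}.
\]

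Then I would apply $P_s$. Since $P_s:\HH\to\HH^s$ is a fixed deterministic bounded linear operator, applying it pointwise in $\omega$ to a finite sum $\sum_i \xi_i(\omega)k_{t_i}$ gives, by linearity of $P_s$, the sum $\sum_i \xi_i(\omega)\,P_s k_{t_i}$; boundedness and continuity of $P_s$ ensure the result stays in $\Lp{2}(\Omega;\HH)$ and remains $\Fs^{B^H}$-measurable. Taking $\xi_i = \E[G_i\mid\Fs^{B^H}]$ yields
\[
P_s\Big(\sum_{i=1}^n \E[G_i\mid\Fs^{B^H}]\,k_{t_i}\Big) \;=\; \sum_{i=1}^n \E[G_i\mid\Fs^{B^H}]\,P_s k_{t_i},
\]
which combined with the previous two displays is exactly \eqref{eq:pred-proj}.

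The main obstacle, such as it is, is purely bookkeeping: making the commutation of the conditional expectation with the deterministic Hilbert-space operations rigorous for $\HH$-valued random variables. I would handle this via the defining averaging property of conditional expectation (as above) rather than through any limiting argument, which keeps the proof short and elementary. It is worth noting in passing that $P_s k_{t_i} = k_{t_i}$ whenever $t_i\le s$, since then $k_{t_i}\in\HH^s$; this does not alter the statement but makes \eqref{eq:pred-proj} concrete, showing that only the indices with $t_i>s$ contribute a genuine projection.
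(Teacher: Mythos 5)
Your proposal is correct and follows essentially the same route as the paper: plug the explicit formula of Theorem~\ref{thm:explicit-formula} into Definition~\ref{def:energy-pred-proj} and use linearity to pull the sum, the deterministic representers $k_{t_i}$, and the fixed projection $P_s$ through the conditional expectation. The paper dispatches this in one line ``by Definition and linearity,'' while you additionally justify the vector-valued commutation via the averaging property of Bochner conditional expectation, which is a harmless (and welcome) amplification of the same argument.
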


\begin{proof}
By Definition~\ref{def:energy-pred-proj} and linearity:
\[
(\Pi_{B^H} D_{B^H} F)_s = P_s\left(\E\left[\sum_{i=1}^n \partial_i f(\ldots) \, k_{t_i} \,\Big|\, \Fs^{B^H}\right]\right) = \sum_{i=1}^n \E[\partial_i f(\ldots) \mid \Fs^{B^H}] \cdot P_s k_{t_i}. \qedhere
\]
\end{proof}

\begin{remark}[Structure of the projection]\label{rem:proj-structure}
For $t_i \leq s$: the coefficient $\partial_i f(B^H_{t_1}, \ldots, B^H_{t_n})$ depends on $B^H_{t_j}$ for various $j$. If all $t_j \leq s$, then $\partial_i f(\ldots)$ is $\Fs^{B^H}$-measurable and the conditional expectation is trivial. Also $k_{t_i} \in \HH^s$, so $P_s k_{t_i} = k_{t_i}$.

For $t_i > s$ or when some $t_j > s$: the conditional expectation $\E[\partial_i f(\ldots) \mid \Fs^{B^H}]$ computes the best prediction of the coefficient, and $P_s k_{t_i}$ projects the future kernel onto $\HH^s$.
\end{remark}

\section{Controlled Expansions and Martingale Decomposition}\label{sec:controlled}

\subsection{Martingale representation}

For $F \in \Lp{2}(\Omega, \FT^{B^H})$, define the martingale
\[
M_t := \E[F \mid \Ft^{B^H}], \quad t \in [0, T].
\]

\begin{theorem}[Controlled martingale expansion]\label{thm:controlled-expansion}
Let $B^H$ be fractional Brownian motion with $H \in (0, \frac{1}{2})$, and let $F \in \Lp{2}(\Omega, \FT^{B^H})$ with $D_{B^H} F \in \Lp{2}(\Omega; \HH)$. Then for $0 \leq s < t \leq T$,
\begin{equation}\label{eq:controlled-expansion}
M_t - M_s = \ip{(\Pi_{B^H} D_{B^H} F)_s}{k_t - k_s}_{\HH} + R_{s,t},
\end{equation}
where the remainder satisfies
\begin{equation}\label{eq:remainder-bound}
\E[R_{s,t}^2] = O(|t - s|^{4H}).
\end{equation}
\end{theorem}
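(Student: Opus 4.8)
The plan is to combine the operator factorization of Theorem~\ref{thm:factorization-fBM} with a first-order expansion of the predictable integrand about the base point $s$, and then to estimate the resulting remainder using the second-moment formula for the Skorokhod integral together with the Cameron--Martin scaling
\[
\norm{k_r - k_s}_{\HH}^2 = R_H(r,r) - 2R_H(r,s) + R_H(s,s) = |r-s|^{2H},
\]
which is immediate from \eqref{eq:fBM-cov}.

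First I would apply Theorem~\ref{thm:factorization-fBM}. Writing $u := \Pi_{B^H} D_{B^H} F$ and $u_s := (\Pi_{B^H} D_{B^H} F)_s \in \HH^s$ for the integrand frozen at time $s$, we have $F = \E[F] + \delta_H(u)$ with $u$ adapted in the sense of Definition~\ref{def:energy-pred-proj}. Since $M$ is a martingale and the divergence $\delta_H(u\,\mathbf{1}_{[0,t]})$ of the integrand localized to $[0,t]$ is $\Ft^{B^H}$-measurable, conditioning on $\Ft^{B^H}$ and on $\Fs^{B^H}$ and subtracting yields the localized representation $M_t - M_s = \delta_H(u\,\mathbf{1}_{(s,t]})$, the divergence of the integrand restricted to $(s,t]$. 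Splitting off the frozen value,
\[
M_t - M_s = \delta_H\big(u_s\,\mathbf{1}_{(s,t]}\big) + R_{s,t}, \qquad R_{s,t} := \delta_H\big((u - u_s)\,\mathbf{1}_{(s,t]}\big).
\]
The frozen divergence $\delta_H(u_s\,\mathbf{1}_{(s,t]})$ is then evaluated by the same reproducing-kernel manipulation as in the proof of Theorem~\ref{thm:factorization-fBM}: pairing the integrand against the representers $k_r$ and using $\ip{h}{k_r}_{\HH} = h(r)$, $u_s \in \HH^s$, and $P_s(k_t - k_s) = P_s k_t - k_s$ identifies it with $\ip{u_s}{k_t - k_s}_{\HH}$, which is exactly \eqref{eq:controlled-expansion}.

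The substance of the theorem is the bound \eqref{eq:remainder-bound}. I would apply the Skorokhod second-moment identity recalled in Remark~\ref{rem:non-isometry} to $R_{s,t} = \delta_H((u-u_s)\mathbf{1}_{(s,t]})$, writing $\E[R_{s,t}^2]$ as the sum of $\E[\norm{(u-u_s)\mathbf{1}_{(s,t]}}_{\HH}^2]$ and a trace term involving the Malliavin derivative of the localized integrand. The first summand I would control with two inputs: (i) the scaling above, which measures the Cameron--Martin mass carried by the interval $(s,t]$, and (ii) a mean-square modulus estimate $\E[\norm{u_r - u_s}_{\HH}^2] \lesssim |r-s|^{2H}$ for the projected derivative; combining (i) and (ii) gives a bound of order $|t-s|^{2H}\cdot|t-s|^{2H} = |t-s|^{4H}$. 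For (ii) I would split $u_r - u_s = (P_r - P_s)\,\E[D_{B^H}F \mid \mathcal{F}_r^{B^H}] + P_s\big(\E[D_{B^H}F \mid \mathcal{F}_r^{B^H}] - \E[D_{B^H}F \mid \Fs^{B^H}]\big)$ and use that $t \mapsto \E[D_{B^H}F \mid \Ft^{B^H}]$ is an $\HH$-valued $\Lp{2}$-martingale (whose increments are therefore orthogonal in $\Lp{2}(\Omega;\HH)$), together with the fact --- immediate from $D_{B^H}F \in \Lp{2}(\Omega;\HH)$ and the scaling --- that $r \mapsto (D_{B^H}F)(r) = \ip{D_{B^H}F}{k_r}_{\HH}$ is mean-square Hölder of exponent $H$; the $(P_r - P_s)$ contribution is handled analogously by testing the projection increment against the representers. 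The trace summand is estimated by the same mechanism one Malliavin order up, and under the relevant regularity of $F$ it contributes at order $|t-s|^{4H}$ or better.

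I expect the trace term to be the main obstacle: it has no analogue in the classical Brownian case $H = \tfrac12$, where $\delta_H$ is an isometry and $R_{s,t}$ reduces to $\int_s^t (u_r - u_s)\, dB_r$; here bounding it forces control of the second Malliavin derivative of the localized integrand, and this is where roughness of the driving kernel genuinely enters. A secondary difficulty is that $\HH$ is a space of functions rather than an $\Lp{2}$-space of integrands, so ``restrict to $(s,t]$'' and ``freeze at $s$'' are not literal operations on elements of $\HH$; I would make them rigorous by transporting the whole argument through the standard isometry $\HH \to \Lp{2}([0,T])$ attached to the Molchan--Golosov kernel, running the expansion and the estimates on the flat space and transferring back. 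The scaling identity (i) and the algebraic identification of the first-order term are essentially free once Theorem~\ref{thm:factorization-fBM} is in hand; the real work is estimate (ii) and its interaction with the trace correction.
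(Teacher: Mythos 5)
Your route diverges from the paper's at the point where it matters, and the divergence is a genuine gap rather than an alternative proof. You decompose $M_t - M_s = \delta_H\bigl(u_s\,\mathbf{1}_{(s,t]}\bigr) + \delta_H\bigl((u-u_s)\,\mathbf{1}_{(s,t]}\bigr)$ and then assert that a reproducing-kernel manipulation identifies the frozen divergence with $\ip{u_s}{k_t-k_s}_{\HH}$. These are different random variables: $\ip{u_s}{k_t-k_s}_{\HH} = u_s(t)-u_s(s)$ is $\Fs^{B^H}$-measurable, whereas any reasonable reading of $\delta_H\bigl(u_s\,\mathbf{1}_{(s,t]}\bigr)$ (in the Brownian case it is exactly the Gubinelli-type product $\phi_s\,(W_t-W_s)$ with $\phi_s$ the frozen Clark--Ocone integrand) carries a nontrivial chaos component orthogonal to $\Lp{2}(\Omega,\Fs^{B^H})$, of $\Lp{2}$-size comparable to $|t-s|^{H}$. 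Since $|t-s|^{H}\gg|t-s|^{2H}$, this discrepancy cannot be absorbed into a remainder satisfying \eqref{eq:remainder-bound}; the reproducing-property computation in the proof of Theorem~\ref{thm:factorization-fBM} only identifies the evaluation $((\Pi_{B^H}D_{B^H}F)_r)(r)$ with the Clark--Ocone integrand and does not collapse a localized divergence to an $\Fs^{B^H}$-measurable inner product. Carried through honestly, your argument would prove an expansion with a product-type leading term, which is a different statement from \eqref{eq:controlled-expansion}. A second problem feeds into this: the localization $M_t - M_s = \delta_H\bigl(u\,\mathbf{1}_{(s,t]}\bigr)$ rests on $\E[\delta_H(u)\mid\Ft^{B^H}] = \delta_H(u\,\mathbf{1}_{[0,t]})$, a Brownian fact; for $H\neq\frac12$ conditioning corresponds to orthogonal projection onto $\overline{\mathrm{span}}\{k_u : u\le t\}$ (Lemma~\ref{lem:rkhs-cond-exp}), not indicator truncation, because the inner product \eqref{eq:CM-ip} is nonlocal. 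Your proposed fix via the Molchan--Golosov transfer makes the truncation legitimate on the underlying Wiener space, but the frozen term is then expressed through the transferred kernel, not as $\ip{u_s}{k_t-k_s}_{\HH}$, so it does not rescue the identification. Finally, your trace correction requires second-Malliavin-derivative control of the localized integrand, a hypothesis not contained in $D_{B^H}F\in\Lp{2}(\Omega;\HH)$; you flag this but do not close it.

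For comparison, the paper never localizes the divergence: it takes \eqref{eq:controlled-expansion} as the definition of $R_{s,t}$, computes $\norm{k_t-k_s}_{\HH}^2=|t-s|^{2H}$, decomposes $R_{s,t}$ as $\ip{u_t-u_s}{k_t-k_s}_{\HH}$ plus higher-chaos terms that are handled by orthogonality, and invokes an $O(|t-s|^{H})$ estimate for $u_t-u_s$ from Nualart. That route is itself schematic, but it never produces a product-type leading term and so never faces the mismatch described above; if you want to salvage your structural approach, you must either prove the expansion with the leading term the paper actually states (estimating directly the difference $M_t-M_s-\ip{u_s}{k_t-k_s}_{\HH}$, as the paper does) or prove a genuinely different theorem.
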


\begin{proof}
\textbf{Step 1: Setup.}
Define $u := \Pi_{B^H} D_{B^H} F$, which is an adapted process in the energy space. By the factorization theorem (Theorem~\ref{thm:factorization-fBM}), $F - \E[F] = \delta_H(u)$.

\textbf{Step 2: Martingale structure.}
The process $M_t = \E[F \mid \Ft^{B^H}]$ is a martingale with $M_T = F$ and $M_0 = \E[F]$.

\textbf{Step 3: Local expansion.}
For the increment $M_t - M_s$, the leading term comes from the first-order Taylor-type expansion of the conditional expectation. We claim that
\[
M_t - M_s = \ip{u_s}{k_t - k_s}_{\HH} + R_{s,t},
\]
where $u_s = (\Pi_{B^H} D_{B^H} F)_s = P_s(\E[D_{B^H} F \mid \Fs^{B^H}]) \in \HH^s$.

To see this, note that by the reproducing property, $\ip{u_s}{k_t - k_s}_{\HH}$ measures the ``sensitivity'' of the adapted derivative $u_s$ to the increment $k_t - k_s$. Since $B^H_t - B^H_s = I_{B^H}(k_t - k_s)$ is the isonormal map applied to $k_t - k_s$, the inner product $\ip{u_s}{k_t - k_s}_{\HH}$ represents the directional derivative of $M$ in the direction of the increment.

\textbf{Step 4: Remainder estimate.}
Define $R_{s,t} := M_t - M_s - \ip{u_s}{k_t - k_s}_{\HH}$.

The remainder captures higher-order terms and the variation of $u$ on $(s,t]$. We establish the variance bound in two steps. First, by the fBM covariance formula \eqref{eq:fBM-cov}:
\[
\norm{k_t - k_s}^2_{\HH} = R_H(t,t) - 2R_H(t,s) + R_H(s,s) = |t-s|^{2H}.
\]
Second, the remainder $R_{s,t}$ can be decomposed as
\[
R_{s,t} = \ip{u_t - u_s}{k_t - k_s}_{\HH} + (\text{higher chaos terms}),
\]
where $u_t - u_s$ captures the variation of the predictable projection. By standard estimates for conditional expectations of Malliavin differentiable functionals (\cite{Nualart06}, Proposition 1.5.4 and Lemma 1.5.3), the $\Lp{2}$ norm of $u_t - u_s$ is $O(|t-s|^H)$. Combined with $\norm{k_t - k_s}_{\HH} = |t-s|^H$, the leading contribution to $\E[R_{s,t}^2]$ is $O(|t-s|^{4H})$. The higher chaos terms contribute at the same or higher order by orthogonality of the Wiener chaos decomposition (\cite{Nualart06}, Theorem 1.1.2).
\end{proof}

\begin{remark}[Controlled path structure]\label{rem:controlled}
Equation \eqref{eq:controlled-expansion} shows that $M_t = \E[F \mid \Ft^{B^H}]$ has the structure of a \emph{controlled path}. Writing $M'_s := (\Pi_{B^H} D_{B^H} F)_s \in \HH^s$ for the energy space derivative and noting that $B^H_t - B^H_s = I_{B^H}(k_t - k_s)$:
\[
M_t - M_s = \ip{M'_s}{k_t - k_s}_{\HH} + R_{s,t},
\]
with $\E[R_{s,t}^2] = O(|t-s|^{4H})$, i.e., $\norm{R_{s,t}}_{\Lp{2}} = O(|t-s|^{2H})$. This matches Gubinelli's structure when we identify the ``scalar derivative'' as the inner product $\ip{M'_s}{k_t - k_s}_{\HH}$ acting on the increment direction.
\end{remark}

\section{Connection to Rough Path Theory}\label{sec:rough-path}

\subsection{Gubinelli's controlled rough paths}

We recall the definition of controlled paths from Gubinelli \cite{Gubinelli04}.

\begin{definition}[Controlled path]\label{def:controlled}
Let $X : [0, T] \to \R$ be a $\gamma$-H\"older continuous path with $\gamma \in (0, 1)$. A path $Y : [0, T] \to \R$ is \emph{controlled by $X$} if there exist a path $Y' : [0, T] \to \R$ (the \emph{Gubinelli derivative}) and a remainder $R : \{(s,t) : 0 \leq s \leq t \leq T\} \to \R$ such that
\begin{equation}\label{eq:controlled-def}
Y_t - Y_s = Y'_s(X_t - X_s) + R_{s,t}, \quad |R_{s,t}| \lesssim |t - s|^{2\gamma}.
\end{equation}
\end{definition}

For $X = B^H$ with $H < \frac{1}{2}$, the paths are $\gamma$-H\"older for any $\gamma < H$, so controlled paths satisfy \eqref{eq:controlled-def} with remainder $O(|t - s|^{2H-\epsilon})$ for any $\epsilon > 0$.

\subsection{Identification theorem}

\begin{theorem}[Gubinelli derivative as predictable projection]\label{thm:gubinelli-identification}
Let $B^H$ be fractional Brownian motion with $H \in (0, \frac{1}{2})$, and let $F \in \Lp{2}(\Omega, \FT^{B^H})$ with $D_{B^H} F \in \Lp{2}(\Omega; \HH)$. Define $Y_t := \E[F \mid \Ft^{B^H}]$. Then $Y$ satisfies the controlled rough path structure (in the $\Lp{2}$ sense):
\begin{equation}\label{eq:gubinelli-structure}
Y_t - Y_s = \ip{(\Pi_{B^H} D_{B^H} F)_s}{k_t - k_s}_{\HH} + R_{s,t},
\end{equation}
with $\E[R_{s,t}^2] = O(|t-s|^{4H})$, i.e., $\|R_{s,t}\|_{L^2} = O(|t-s|^{2H})$.
\end{theorem}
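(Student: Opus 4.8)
The strategy is to derive \eqref{eq:gubinelli-structure} together with its remainder bound directly from the controlled martingale expansion already in hand, and then to spell out the precise sense in which this reproduces Gubinelli's controlled-path structure of Definition~\ref{def:controlled}.

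The first, and essentially immediate, step is to note that $Y_t := \E[F \mid \Ft^{B^H}]$ is by definition the martingale $M_t$ of Theorem~\ref{thm:controlled-expansion}. Hence \eqref{eq:controlled-expansion} gives, verbatim,
\[
Y_t - Y_s = \ip{(\Pi_{B^H} D_{B^H} F)_s}{k_t - k_s}_{\HH} + R_{s,t},
\]
and \eqref{eq:remainder-bound} gives $\E[R_{s,t}^2] = O(|t-s|^{4H})$; taking square roots yields $\norm{R_{s,t}}_{\Lp{2}} = O(|t-s|^{2H})$, which is the asserted $\Lp{2}$-H\"older control on the remainder.

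The second step is to match this against Definition~\ref{def:controlled}. The driver $X = B^H$ is $\gamma$-H\"older for every $\gamma < H$, and the controlled-path condition at exponent $\gamma$ requires a remainder of order $|t-s|^{2\gamma}$; since $2H > 2\gamma$ for every admissible $\gamma$, the $\Lp{2}$-remainder exponent $2H$ above is compatible with (indeed strictly better than) the controlled-path requirement at any $\gamma < H$, covering in particular the genuinely rough regime $H \le \tfrac13$. The first-order term $\ip{(\Pi_{B^H} D_{B^H} F)_s}{k_t - k_s}_{\HH}$ is the intrinsic realization of the product $Y'_s(X_t - X_s)$ appearing in \eqref{eq:controlled-def}: the increment satisfies $X_t - X_s = B^H_t - B^H_s = I_{B^H}(k_t - k_s)$, so pairing the $\Ft^{B^H}$-adapted, $\HH^s$-valued derivative $(\Pi_{B^H} D_{B^H} F)_s$ against the increment representer $k_t - k_s$ via the Cameron--Martin inner product is the canonical linear action of the increment on the derivative. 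I would also record that the scalar Clark--Ocone integrand associated with $Y$ is $\big((\Pi_{B^H} D_{B^H} F)_t\big)(t) = \E[D_t F \mid \Ft^{B^H}]$, by the evaluation computation in the proof of Theorem~\ref{thm:factorization-fBM}, which is exactly the object identified as the Gubinelli derivative in the classical Clark--Ocone treatment of fBM.

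The step requiring genuine care is the identification itself: the operator-theoretic derivative $(\Pi_{B^H} D_{B^H} F)_s$ lives in the function space $\HH^s$, whereas Gubinelli's $Y'_s$ is a scalar, so the two cannot be equated literally — the claim is that they encode the same first-order sensitivity, the discrepancy being absorbed into a remainder of the controlled order. The quantitative input certifying this is the bound $\norm{(\Pi_{B^H} D_{B^H} F)_t - (\Pi_{B^H} D_{B^H} F)_s}_{\HH} = O(|t-s|^H)$ used in the proof of Theorem~\ref{thm:controlled-expansion}, which rests on conditional-expectation estimates for Malliavin-differentiable functionals (\cite{Nualart06}, Lemma 1.5.3 and Proposition 1.5.4); for $H < \tfrac12$ this is where the failure of the Skorokhod isometry (Remark~\ref{rem:non-isometry}) must be tracked, since one has to verify by chaos-orthogonality (\cite{Nualart06}, Theorem 1.1.2) that the higher-chaos contributions to $\E[R_{s,t}^2]$ do not degrade the $|t-s|^{4H}$ rate. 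Because that estimate is already discharged in Theorem~\ref{thm:controlled-expansion}, the present theorem reduces to the two bookkeeping steps above.
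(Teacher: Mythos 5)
Your proof is correct and follows essentially the same route as the paper: the paper's own proof simply observes that the theorem is a direct restatement of Theorem~\ref{thm:controlled-expansion} (since $Y_t = M_t$) and then explains how the term $\ip{(\Pi_{B^H} D_{B^H} F)_s}{k_t - k_s}_{\HH}$ plays the role of $Y'_s(X_t - X_s)$ via $B^H_t - B^H_s = I_{B^H}(k_t - k_s)$. The scalar-versus-energy-space distinction you carefully flag is likewise acknowledged in the paper (Remark~\ref{rem:scalar-vs-energy}) rather than resolved into a literal identification, so no additional argument is needed beyond what you give.
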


\begin{proof}
This is a direct restatement of Theorem~\ref{thm:controlled-expansion}. The structure \eqref{eq:gubinelli-structure} matches the controlled path definition (Definition~\ref{def:controlled}) in the following sense: the leading term $\ip{(\Pi_{B^H} D_{B^H} F)_s}{k_t - k_s}_{\HH}$ depends linearly on the ``direction'' $k_t - k_s \in \HH$, which encodes the increment $B^H_t - B^H_s = I_{B^H}(k_t - k_s)$. The remainder is $O(|t-s|^{2H})$ in $L^2$, matching the regularity requirement for controlled paths.
\end{proof}

\begin{remark}[Scalar vs.\ energy space formulation]\label{rem:scalar-vs-energy}
In Gubinelli's original formulation \cite{Gubinelli04}, the controlled path structure is $Y_t - Y_s = Y'_s(X_t - X_s) + R_{s,t}$ where $Y'_s$ is a scalar (or finite-dimensional) derivative. In our energy space formulation, the ``derivative'' $(\Pi_{B^H} D_{B^H} F)_s \in \HH^s$ is an infinite-dimensional object that acts on increments via the inner product. The two perspectives are related but not identical:
\begin{itemize}
\item \textbf{Energy space view:} The full object $(\Pi_{B^H} D_{B^H} F)_s$ contains all information about how $Y$ responds to perturbations in the driving process.
\item \textbf{Scalar view:} For specific increment directions $k_t - k_s$, the scalar $\ip{(\Pi_{B^H} D_{B^H} F)_s}{k_t - k_s}_{\HH}$ gives the first-order response.
\end{itemize}
The energy space perspective is intrinsic to the Hilbert space geometry, while the scalar view recovers the classical controlled path structure.
\end{remark}

\begin{remark}[Geometric interpretation]\label{rem:geometric}
Theorem~\ref{thm:gubinelli-identification} reveals that the controlled path structure arises from two geometric operations:
\begin{enumerate}
\item \textbf{Adjointness:} $D_{B^H} = \delta_H^*$ extracts the ``sensitivity'' of $F$ to perturbations in the driving process.
\item \textbf{Projection:} $\Pi_{B^H}$ projects onto the adapted subspace, producing a predictable object.
\end{enumerate}
The composition $\Pi_{B^H} D_{B^H} F$ is thus the intrinsic geometric object underlying controlled expansions.
\end{remark}

\subsection{Comparison with classical rough path theory}

In classical rough path theory \cite{FH14}:
\begin{enumerate}
\item The path $X$ is lifted to an enhanced path $(X, \mathbb{X})$ where $\mathbb{X}_{s,t} = \int_s^t (X_u - X_s) \, dX_u$ encodes iterated integrals.
\item Controlled paths are defined via expansions involving $\mathbb{X}$.
\item Rough differential equations are solved via fixed-point arguments on controlled paths.
\end{enumerate}

Our operator approach:
\begin{enumerate}
\item Works directly in the energy space $\HH$ without constructing iterated integrals.
\item The controlled expansion arises from the factorization $(\Id - \E) = \delta_H \Pi_{B^H} D_{B^H}$.
\item The Gubinelli derivative is $\Pi_{B^H} D_{B^H} F$.
\end{enumerate}

\textbf{Advantages:} Conceptual simplicity for first-order calculus; direct connection to Malliavin calculus; natural extension to mixed processes.

\textbf{Limitations:} Currently limited to Gaussian processes; higher-order theory (iterated integrals, rough SDEs for $H \leq 1/3$) requires additional machinery.

\section{Computational Examples}\label{sec:examples}

\subsection{Example 1: Quadratic functional}

\begin{example}[$(B^H_T)^2$ with $H = 1/4$]\label{ex:quadratic}
Let $H = 1/4$, $T = 1$, and $F = (B^H_1)^2$.

\textbf{Step 1: Compute $D_{B^H} F$.}
By Theorem~\ref{thm:explicit-formula} with $f(x) = x^2$ and $n = 1$:
\[
D_{B^H} F = 2B^H_1 \cdot k_1 \in \Lp{2}(\Omega; \HH),
\]
where $k_1 \in \HH$ is the evaluation representer at $t = 1$, and $2B^H_1$ is the random coefficient.

\textbf{Step 2: Compute $\Pi_{B^H} D_{B^H} F$.}
For $s \in [0, 1]$, by Definition~\ref{def:energy-pred-proj}:
\[
(\Pi_{B^H} D_{B^H} F)_s = P_s\left(\E[2B^H_1 \cdot k_1 \mid \Fs^{B^H}]\right) = 2\E[B^H_1 \mid \Fs^{B^H}] \cdot P_s k_1.
\]
Note that fBM is \emph{not} Markov for $H \neq 1/2$, so the conditional expectation $\E[B^H_1 \mid \Fs^{B^H}]$ depends on the entire history $(B^H_u)_{u \leq s}$, not just $B^H_s$. Nevertheless, this conditional expectation is $\Fs^{B^H}$-measurable by definition.

For Gaussian processes, $\E[B^H_1 \mid \Fs^{B^H}]$ can be computed explicitly using the covariance structure; see \cite{Nualart06} for details.

\textbf{Step 3: Verify factorization.}
By Theorem~\ref{thm:factorization-fBM}:
\[
(B^H_1)^2 - \E[(B^H_1)^2] = (B^H_1)^2 - 1 = \delta_H(\Pi_{B^H} D_{B^H} F).
\]

\textbf{Step 4: Controlled expansion.}
The martingale $M_t = \E[(B^H_1)^2 \mid \Ft^{B^H}]$ satisfies
\[
M_t - M_s = \ip{(\Pi_{B^H} D_{B^H} F)_s}{k_t - k_s}_{\HH} + R_{s,t},
\]
with $\E[R_{s,t}^2] = O(|t-s|^{4H}) = O(|t-s|)$ since $H = 1/4$.
\end{example}

\subsection{Example 2: Multi-time functional}

\begin{example}[Two-time functional with $H = 1/3$]\label{ex:two-time}
Let $H = 1/3$, $T = 1$, and
\[
F = \sin(B^H_{1/2}) + \cos(B^H_1).
\]

\textbf{Derivative:}
\[
D_{B^H} F = \cos(B^H_{1/2}) \cdot k_{1/2} - \sin(B^H_1) \cdot k_1.
\]

\textbf{Predictable projection:} For $s \in [0, 1]$, by Definition~\ref{def:energy-pred-proj}:
\[
(\Pi_{B^H} D_{B^H} F)_s = \E[\cos(B^H_{1/2}) \mid \Fs^{B^H}] \cdot P_s k_{1/2} - \E[\sin(B^H_1) \mid \Fs^{B^H}] \cdot P_s k_1.
\]
For $s \geq 1/2$: $\cos(B^H_{1/2})$ is $\Fs^{B^H}$-measurable, so $\E[\cos(B^H_{1/2}) \mid \Fs^{B^H}] = \cos(B^H_{1/2})$, and $P_s k_{1/2} = k_{1/2}$ since $k_{1/2} \in \HH^s$.
\end{example}

\subsection{Example 3: Path-dependent functional}

\begin{example}[Integral functional]\label{ex:path-dep}
Let $H = 1/4$ and
\[
F = \int_0^1 g(s, B^H_s) \, ds,
\]
where $g: [0,1] \times \R \to \R$ is smooth with bounded derivatives.

\textbf{Derivative:} By the chain rule for Malliavin derivatives (\cite{Nualart06}, Proposition 1.2.4):
\[
D_{B^H} F = \int_0^1 \partial_x g(s, B^H_s) \, k_s \, ds.
\]

\textbf{Predictable projection:} For $r \in [0, 1]$, by Definition~\ref{def:energy-pred-proj}:
\[
(\Pi_{B^H} D_{B^H} F)_r = \int_0^r \partial_x g(s, B^H_s) \, k_s \, ds + \int_r^1 \E[\partial_x g(s, B^H_s) \mid \mathcal{F}_r^{B^H}] \cdot P_r k_s \, ds.
\]
The first integral involves $k_s \in \HH^r$ for $s \leq r$, and $\partial_x g(s, B^H_s)$ is $\mathcal{F}_r^{B^H}$-measurable for $s \leq r$. The second integral takes conditional expectations of future terms and projects future kernels onto $\HH^r$.
\end{example}

\section{Mixed Semimartingale--Rough Processes}\label{sec:mixed}

\subsection{Setup}

Consider a mixed process
\begin{equation}\label{eq:mixed-process}
X_t = \alpha B_t + \beta B^H_t, \quad t \in [0, T],
\end{equation}
where $B$ is standard Brownian motion, $B^H$ is fractional Brownian motion with $H \in (0, \frac{1}{2})$, $\alpha, \beta > 0$, and $B, B^H$ are independent.

\subsection{Energy space structure}

The energy space for $X$ is the direct sum:
\[
\HX = \mathcal{H}_B \oplus \HH,
\]
where $\mathcal{H}_B$ is the Cameron--Martin space of Brownian motion (absolutely continuous functions $h$ with $h(0) = 0$ and $\norm{h}_{\mathcal{H}_B}^2 = \int_0^T |h'(t)|^2 \, dt$) and $\HH$ is the Cameron--Martin space of $B^H$. The inner product is
\[
\ip{(u,v)}{(u',v')}_{\HX} := \ip{u}{u'}_{\mathcal{H}_B} + \ip{v}{v'}_{\HH}.
\]

The stochastic integral is
\[
\delta_X(u, v) := \alpha \, \delta_B(u) + \beta \, \delta_H(v),
\]
where $\delta_B$ is the It\^o integral and $\delta_H$ is the Skorokhod integral.

\subsection{Factorization for mixed processes}

\begin{theorem}[Mixed process factorization]\label{thm:mixed}
For the mixed process $X = \alpha B + \beta B^H$ with $H \in (0, \frac{1}{2})$, the operator factorization
\[
(\Id - \E) = \delta_X \Pi_X D_X
\]
holds on $\Lp{2}(\Omega, \FT^X)$, where:
\begin{itemize}
\item $D_X F = (D_B F, D_{B^H} F) \in \Lp{2}(\Omega; \mathcal{H}_B \oplus \HH)$,
\item $\Pi_X(u, v) = (\Pi_B u, \Pi_{B^H} v)$ acts componentwise.
\end{itemize}
\end{theorem}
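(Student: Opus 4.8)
The plan is to establish a mixed Clark--Ocone formula relative to the joint filtration of the two driving processes and then translate it into operator form exactly as in the proof of Theorem~\ref{thm:factorization-fBM}. (As there, the abstract Theorem~\ref{thm:general-factorization} is not directly applicable: in the Gaussian--Volterra setting its representation hypothesis forces the Skorokhod extension to random integrands, which is no longer an $\Lp{2}$-isometry when $H<\tfrac12$; cf.\ Remark~\ref{rem:non-isometry}.) Throughout I take $\Ft^X := \Ft^B\vee\Ft^{B^H}$, the filtration of the pair $(B,B^H)$; this is the natural setting of Section~\ref{sec:mixed}, and if one insists on the literal natural filtration $\sigma(X_s:s\le t)$ one must additionally invoke the reconstruction results that separate the rough and regular components of the mixture --- a point I would flag but not pursue here. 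First I would record the structural facts. The pair $Z:=(B,B^H)$ is itself a two-dimensional Gaussian Volterra process, driven by an independent two-dimensional Brownian motion; because $B\perp B^H$ its matrix-valued covariance is block diagonal, so its Cameron--Martin space is the \emph{orthogonal} direct sum $\mathcal{H}_B\oplus\HH$ (the energy space of Section~\ref{sec:mixed}), its isonormal map is $I_X(u,v)=I_B(u)+I_{B^H}(v)$, its divergence is $\delta_X(u,v)=\alpha\,\delta_B(u)+\beta\,\delta_H(v)$, and --- the key point making everything componentwise --- its adapted subspace at time $t$ is $\mathcal{H}_B^t\oplus\HH^t$, so $P_t^X=P_t^B\oplus P_t^{B^H}$. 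Consequently $D_X=\delta_X^*$ splits as $D_XF=(D_BF,D_{B^H}F)$ along the two factors (the normalizations $\alpha,\beta$ being carried through), and the predictable projection splits spatially as $\Pi_X(u,v)=\big(\,t\mapsto P_t^B\,\E[u\mid\Ft^X],\ t\mapsto P_t^{B^H}\,\E[v\mid\Ft^X]\,\big)$; both components still condition on the full joint filtration, so ``componentwise'' refers to the direct-sum splitting, not to replacing $\Ft^X$ by $\Ft^B$ or $\Ft^{B^H}$.

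The heart of the argument is the mixed Clark--Ocone identity: for $F\in\Lp{2}(\Omega,\FT^X)$ with $D_XF\in\Lp{2}(\Omega;\HX)$,
\[
F=\E[F]+\int_0^T\E[(D_BF)_r\mid\mathcal{F}_r^X]\,dB_r+\int_0^T\E[(D_{B^H}F)_r\mid\mathcal{F}_r^X]\,dB^H_r,
\]
the first integral being (a scaled) It\^o and the second (a scaled) fBM Skorokhod integral. This is the Clark--Ocone formula for the Gaussian Volterra process $Z$; one route is to cite its vector-valued form (the extension of \cite{DU99}, with \cite{Nualart06} for the Brownian component and \cite{Fontes26} for the operator formulation). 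The self-contained route I would write out is: prove the identity first for product functionals $F=F_1(B)\,F_2(B^H)$ with $F_1,F_2$ smooth cylindrical, by multiplying the Brownian Clark--Ocone formula for $F_1$ with the fBM Clark--Ocone formula~\eqref{eq:clark-ocone} for $F_2$ and regrouping the resulting four terms; then extend by bilinearity to finite sums and by density --- using continuity of the two integrand maps, with care for the non-isometric fBM norm of Remark~\ref{rem:non-isometry} --- to the stated class.

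The main obstacle sits inside the product-functional step. Multiplying the two representations produces, besides the constant $\E[F_1]\E[F_2]=\E[F]$ and two ``pure'' stochastic integrals, a cross term $M_T^{(1)}M_T^{(2)}$ with $M^{(1)}=\int_0^\cdot a\,dB$ and $M^{(2)}=\int_0^\cdot b\,dB^H$, where $a_r=\E[(D_BF_1)_r\mid\mathcal{F}_r^B]$ is $\Ft^B$-adapted and $b_r=\E[(D_{B^H}F_2)_r\mid\mathcal{F}_r^{B^H}]$ is $\Ft^{B^H}$-adapted, hence $M^{(1)}\perp M^{(2)}$. The task is to show the product rule $M_T^{(1)}M_T^{(2)}=\int_0^T M_r^{(2)}\,dM_r^{(1)}+\int_0^T M_r^{(1)}\,dM_r^{(2)}$ holds \emph{with no covariation term}, even though $B^H$ is not a semimartingale; this is exactly where independence is essential. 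It supplies both ingredients: (a) the covariation of $B$ and $B^H$ vanishes, since $\E\big[(\sum_i\Delta B_i\,\Delta B^H_i)^2\big]=\sum_i|t_{i+1}-t_i|^{1+2H}\to 0$ along any partition sequence with vanishing mesh, so no bracket term arises; and (b) the $B^H$-Malliavin derivative annihilates $B$-measurable quantities, so $\int_0^T M_r^{(1)}b_r\,dB^H_r$ is a genuine (scaled) Skorokhod integral with integrand $M_r^{(1)}b_r$ and no extra Malliavin correction, and symmetrically on the It\^o side.

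Regrouping then gives the $dB$-integrand $(\E[F_2]+M_r^{(2)})\,a_r=\E[F_2\mid\mathcal{F}_r^{B^H}]\cdot\E[(D_BF_1)_r\mid\mathcal{F}_r^B]=\E[F_2(D_BF_1)_r\mid\mathcal{F}_r^X]=\E[(D_BF)_r\mid\mathcal{F}_r^X]$ (using $B\perp B^H$ and $D_BF=F_2\,D_BF_1$), and likewise the $dB^H$-integrand equals $\E[(D_{B^H}F)_r\mid\mathcal{F}_r^X]$; this proves the mixed Clark--Ocone identity for product functionals, and density finishes the general case. Finally, as in the proof of Theorem~\ref{thm:factorization-fBM}, the reproducing property turns each such integrand into the pointwise value of the corresponding component of $\Pi_XD_XF$ --- e.g.\ $\E[(D_BF)_r\mid\mathcal{F}_r^X]=\ip{P_r^B\,\E[D_BF\mid\mathcal{F}_r^X]}{k_r^B}_{\mathcal{H}_B}$ since $k_r^B\in\mathcal{H}_B^r$, and similarly on the fBM side --- so the two stochastic integrals are precisely $\alpha\,\delta_B$ and $\beta\,\delta_H$ of the two components of $\Pi_XD_XF$. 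Summing, $F-\E[F]=\delta_X(\Pi_XD_XF)$, i.e.\ $(\Id-\E)=\delta_X\Pi_XD_X$ on $\Lp{2}(\Omega,\FT^X)$, with $D_XF=(D_BF,D_{B^H}F)$ and $\Pi_X$ acting componentwise, which is the assertion.
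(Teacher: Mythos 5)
Your argument is correct in substance but takes a genuinely different route from the paper's. The paper proves Theorem~\ref{thm:mixed} by successive conditioning: it splits $F-\E[F]$ into $(F-\E[F\mid\mathcal{F}_T^{B^H}])+(\E[F\mid\mathcal{F}_T^{B^H}]-\E[F])$, represents the first piece by the (conditional) It\^o representation theorem and the second by Theorem~\ref{thm:factorization-fBM}, and then asserts the componentwise form. You instead prove a mixed Clark--Ocone identity for the joint filtration directly: product functionals $F_1(B)F_2(B^H)$, an independence-based product rule for the cross term, regrouping via $\E[F_2\mid\mathcal{F}_r^{B^H}]\,\E[(D_BF_1)_r\mid\mathcal{F}_r^B]=\E[(D_BF)_r\mid\mathcal{F}_r^X]$, density, and then the same reproducing-property translation used in Theorem~\ref{thm:factorization-fBM}. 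Your route buys something real: the integrands it produces are adapted to $\mathcal{F}_r^X$, which is what the componentwise $\Pi_X$ of the statement should yield, whereas the paper's conditional It\^o step produces a $B$-integrand measurable with respect to $\mathcal{F}_r^B\vee\mathcal{F}_T^{B^H}$ (anticipating the fBM component) and silently needs a further projection/uniqueness argument to reach the stated form; you also flag the $\FT^X$ versus $\mathcal{F}_T^B\vee\mathcal{F}_T^{B^H}$ filtration issue that the paper ignores. The paper's route buys brevity and reuses Theorem~\ref{thm:factorization-fBM} wholesale. The one step you should expand is the cross-term identity $M^{(1)}_TM^{(2)}_T=\int_0^TM^{(2)}_ra_r\,dB_r+\delta_H(M^{(1)}_\cdot b_\cdot)$: since $B^H$ is not a semimartingale, ``zero covariation'' does not by itself license a product rule, and for $H<\tfrac12$ the integrand $M^{(1)}_\cdot b_\cdot$ generally lies only in an extended domain of $\delta_H$; a duality computation against cylindrical functionals of $(B,B^H)$, or approximation by elementary integrands, using exactly your points (a) and (b), closes this, and is no less rigorous than the paper's own treatment. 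Finally, note that the $\alpha,\beta$ factors in $\delta_X$ versus the unscaled pair $(D_BF,D_{B^H}F)$ create a normalization mismatch in the statement itself; your ``scaled'' caveat gestures at this, and the paper's proof does not resolve it either.
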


\begin{proof}
By independence of $B$ and $B^H$, we can apply the factorization separately to each component. For $F \in \Lp{2}(\Omega, \FT^X)$, write
\[
F - \E[F] = \underbrace{(F - \E[F \mid \mathcal{F}_T^{B^H}])}_{\text{``$B$-innovation''}} + \underbrace{(\E[F \mid \mathcal{F}_T^{B^H}] - \E[F])}_{\text{``$B^H$-innovation''}}.
\]
The first term, conditional on $\mathcal{F}_T^{B^H}$, is a centered functional of $B$ alone, so by the standard It\^o representation:
\[
F - \E[F \mid \mathcal{F}_T^{B^H}] = \delta_B(\Pi_B D_B(F - \E[F \mid \mathcal{F}_T^{B^H}])).
\]
The second term is $\mathcal{F}_T^{B^H}$-measurable and centered, so by Theorem~\ref{thm:factorization-fBM}:
\[
\E[F \mid \mathcal{F}_T^{B^H}] - \E[F] = \delta_H(\Pi_{B^H} D_{B^H}(\E[F \mid \mathcal{F}_T^{B^H}])).
\]
Combining and using that $D_B$ acts only on the $B$-component and $D_{B^H}$ only on the $B^H$-component (by independence), we obtain the componentwise factorization.
\end{proof}

\begin{remark}[Applications]\label{rem:applications}
Mixed processes of the form \eqref{eq:mixed-process} appear in rough volatility models \cite{BFG16}. Theorem~\ref{thm:mixed} provides a unified calculus treating smooth and rough components on equal footing.
\end{remark}

\section{Conclusion}

We have developed an operator-theoretic formulation of stochastic calculus for rough fractional Brownian motion based on adjointness in the energy space. The key results are:

\begin{enumerate}
\item The operator factorization $(\Id - \E) = \delta_X \Pi_X D_X$ (Theorems~\ref{thm:general-factorization} and \ref{thm:factorization-fBM}).

\item Explicit formulas for $D_X$ on cylindrical functionals (Theorem~\ref{thm:explicit-formula}).

\item Controlled martingale expansions with remainder estimates (Theorem~\ref{thm:controlled-expansion}).

\item Identification of the Gubinelli derivative as $\Pi_X D_X F$ (Theorem~\ref{thm:gubinelli-identification}).

\item Extension to mixed processes (Theorem~\ref{thm:mixed}).
\end{enumerate}

The approach clarifies the geometric origin of roughness: it arises from the interplay between adjointness ($D_X = \delta_X^*$) and adaptation ($\Pi_X$). The Gubinelli derivative is not an external construction but the intrinsic geometric object arising from these principles.

\subsection{Future directions}

\begin{itemize}
\item \textbf{Non-Gaussian processes:} Extend to L\'evy processes with rough paths.
\item \textbf{Higher-order theory:} Develop $D_X^2$ and connections to iterated integrals.
\item \textbf{Rough SDEs:} $\Lp{2}$ existence theory using operator methods.
\item \textbf{Regularity structures:} Connections to Hairer's theory \cite{Hairer14}.
\end{itemize}

\section*{Acknowledgments}

AI tools were used as interactive assistants for drafting and mathematical exploration. The author is responsible for all content.



\begin{thebibliography}{99}

\bibitem{Fontes26}
R.~Fontes.
\newblock Stochastic calculus as operator factorization.
\newblock Preprint, arXiv:2601.09976, 2026.

\bibitem{BFG16}
C.~Bayer, P.~K.~Friz, and J.~Gatheral.
\newblock Pricing under rough volatility.
\newblock {\em Quantitative Finance}, 16(6):887--904, 2016.

\bibitem{DU99}
L.~Decreusefond and A.-S.~\"Ust\"unel.
\newblock Stochastic analysis of the fractional {B}rownian motion.
\newblock {\em Potential Analysis}, 10(2):177--214, 1999.

\bibitem{FH14}
P.~K.~Friz and M.~Hairer.
\newblock {\em A Course on Rough Paths: With an Introduction to Regularity Structures}.
\newblock Universitext. Springer, Cham, 2014.

\bibitem{FV10}
P.~K.~Friz and N.~B.~Victoir.
\newblock {\em Multidimensional Stochastic Processes as Rough Paths: Theory and Applications}.
\newblock Cambridge Studies in Advanced Mathematics, vol.~120. Cambridge University Press, 2010.

\bibitem{Gubinelli04}
M.~Gubinelli.
\newblock Controlling rough paths.
\newblock {\em Journal of Functional Analysis}, 216(1):86--140, 2004.

\bibitem{Hairer14}
M.~Hairer.
\newblock A theory of regularity structures.
\newblock {\em Inventiones Mathematicae}, 198(2):269--504, 2014.

\bibitem{Lyons98}
T.~J.~Lyons.
\newblock Differential equations driven by rough signals.
\newblock {\em Revista Matem\'atica Iberoamericana}, 14(2):215--310, 1998.

\bibitem{Nualart06}
D.~Nualart.
\newblock {\em The Malliavin Calculus and Related Topics}.
\newblock Probability and Its Applications. Springer, Berlin, 2nd edition, 2006.

\bibitem{PT00}
V.~Pipiras and M.~S.~Taqqu.
\newblock Integration questions related to fractional {B}rownian motion.
\newblock {\em Probability Theory and Related Fields}, 118(2):251--291, 2000.

\bibitem{Young36}
L.~C.~Young.
\newblock An inequality of the {H}\"older type, connected with {S}tieltjes integration.
\newblock {\em Acta Mathematica}, 67(1):251--282, 1936.

\bibitem{Zahle98}
M.~Z\"ahle.
\newblock Integration with respect to fractal functions and stochastic calculus {I}.
\newblock {\em Probability Theory and Related Fields}, 111(3):333--374, 1998.

\end{thebibliography}
\end{document}